\newtheorem{theor}{Theorem}
\newtheorem{state}[theor]{Proposition}
\newtheorem{cor}[theor]{Corollary}
\newtheorem{lemma}[theor]{Lemma}
\theoremstyle{definition}
\newtheorem{define}{Definition}
\newtheorem{example}{%\textbf
{Example}}
\newtheorem{Rule}{Rule}
\newtheorem*{convent}{Convention}
\theoremstyle{remark}
\newtheorem{rem}{\textmd{Remark}}
\def\oldvec{\mathaccent "017E\relax }
\DeclareMathOperator{\Or}{\mathsf{O\oldvec{r}}}
\newcommand{\cev}[1]{\reflectbox{\ensuremath{\vec{\reflectbox{\ensuremath{#1}}}}}}
\newcommand{\BBR}{\mathbb{R}}
\newcommand{\BBN}{\mathbb{N}}
\newcommand{\BBZ}{\mathbb{Z}}
\newcommand{\cP}{\mathcal{P}}\newcommand{\cQ}{\mathcal{Q}}
\newcommand{\cX}{{\EuScript X}}    %{\mathcal{X}}
\newcommand{\cZ}{{\EuScript Z}}    %{\mathcal{Y}}
\newcommand{\bx}{{\boldsymbol{x}}}
\newcommand{\bxi}{{\boldsymbol{\xi}}}
\newcommand{\veps}{\varepsilon}
\newcommand{\dd}{\partial}
\newcommand{\Id}{{\mathrm d}}
\newcommand{\sE}{\mathsf{E}}
\newcommand{\vbDelta}{\vec{\boldsymbol{\Delta}}}
\DeclareMathOperator{\Vertices}{Vert}
\DeclareMathOperator{\Edge}{Edge}
\DeclareMathOperator{\Sym}{Sym}
\DeclareMathOperator{\Aut}{Aut}
\DeclareMathOperator{\sign}{sign}
\DeclareMathOperator{\Gra}{Gra}
\DeclareMathOperator{\End}{End}
\newcommand{\lshad}{[\![}
\newcommand{\rshad}{]\!]}
\newcommand{\by}[1]{\textrm{{#1}}}
\newcommand{\jour}[1]{\textrm{{#1}}}
\newcommand{\vol}[1]{\textrm{{#1}}}
\newcommand{\book}[1]{\textit{{#1}}}
\begin{document}

\keywords{Graph complex, vertex\/-\/expanding differential, orientation morphism, Poisson bracket, deformation}

\mathclass{Primary 
05C22, %Combinatorics: Signed and weighted graphs
%53D55, %Diff. geometry: Deformation quantization
16E45, %Differential graded algebras and applications
53D17; %Poisson manifolds; Poisson groupoids and algebroids
Secondary 
68R10, %CS: Graph theory (including graph drawing)
81R60.%Noncommutative geometry
}

\abbrevauthors{A. V. Kiselev and R. Buring}

\abbrevtitle{The graph orientation morphism revisited}

\title{The Kontsevich graph orientation morphism revisited}

\author{Arthemy V. Kiselev}
%\thanks{Corresponding author; partially supported by the IH\'ES (Bures\/-\/sur\/-\/Yvette, France) and Nokia Fund.}
\address{Bernoulli Institute for Mathematics,
Computer Science and Artificial Intelligence,\\ 
%Faculty of Science \& Engineering),
University of Groningen,
P.O.Box~407, 9700\,AK Groningen, 
The Netherlands.\\
E-mail: A.V.Kiselev@rug.nl (corresponding author)}

\author{Ricardo Buring}
\address{Institut f\"ur Mathematik, 
Johannes Gutenberg\/--\/Uni\-ver\-si\-t\"at,\\
Staudingerweg~9, %4.OG,
\mbox{D-\/55128} Mainz, Germany.\\
E-mail: rburing@uni-mainz.de}

\maketitlebcp

\begin{abstract}
The orientation morphism $\Or(\cdot)(\cP)\colon\gamma\mapsto\dot{\cP}$ associates differential\/-\/polynomial flows $\dot{\cP}=\cQ(\cP)$ on spaces of bi\/-\/vectors~$\cP$ on finite\/-\/dimensional affine manifolds~$N^d$ with (sums of) finite unoriented graphs~$\gamma$ with ordered sets of edges and without multiple edges and one\/-\/cycles. It is known that $\Id$-\/cocycles~$\boldsymbol{\gamma}\in\ker\Id$ with respect to the vertex\/-\/expanding differential~$\Id=[{\bullet}\!\!{-}\!{-}\!\!{\bullet},\cdot]$ are mapped by~$\Or$ to Poisson cocycles $\cQ(\cP)\in\ker\,\lshad\cP,{\cdot}\rshad$, that is, to in\-fi\-ni\-te\-si\-mal symmetries of Poisson bi\/-\/vectors~$\cP$.
%%%
The formula of orientation morphism~$\Or$ was expressed 
   %by Kontsevich (1996), see also C.~Jost (2013), 
in terms of the edge orderings as well as parity\/-\/odd and parity\/-\/even derivations on the odd cotangent bundle $\Pi T^* N^d$ over any $d$-\/dimensional affine real Poisson manifold~$N^d$.
%%%
%By inspecting the rules of signs which are suggested by that algebraic formula of~$\Or$, 
We express this formula
   %summarise its defining properties 
in terms of %the %Kontsevich 
(un)oriented graphs themselves, \textsl{i.e.}\ without explicit reference to supermathematics on~$\Pi T^* N^d$.
% with edge orderings at all internal vertices. The reasoning is illustrated by the tetrahedral and by pentagon and heptagon\/-\/wheel cocycle flows.
\end{abstract}

\subsection*{Introduction}
A differential graded Lie algebra structure on the vector space of unoriented graphs with ordered sets of parity\/-\/odd edges was introduced by Kontsevich in~\cite{MKParisECM,MKZurichICM}. The vertex\/-\/expanding differential $\Id=[\bullet\!{-}\!\bullet,{\cdot}]$ is the adjoint action by the edge~$\bullet\!{-}\!\bullet$, which itself satisfies the master equation $[\bullet\!{-}\!\bullet,\bullet\!{-}\!\bullet]=(\text{const}\neq0)\cdot\mathbf{0}\in$ Graph complex. Examples of $\Id$-\/cocycles are discussed in~\cite{Ascona96,WillwacherZivkovic2015Table} or~\cite{JNMP2017}. Properties of the unoriented graph complex and its relation to the Grothendieck\/--\/Teichm\"uller group were explored by Willwacher %in a series of papers 
(see~\cite{WillwacherGRT}).

The language of %\textsl{oriented} 
graphs allows %is used to 
encoding (poly)differential operators on affine manifolds (let us consider~$\mathbb{R}^d$ for simplicity), as well as encoding graded\/-\/symmetric endomorphisms $\pi\in\End %^{\text{(skew)}}
\bigl(T_{\text{poly}}(\BBR^d)\bigr)$ on the spaces of totally skew\/-\/symmetric 
multivector %polyvector 
fields on~$\BBR^d$. 
For example, a Poisson bivector~$\cP$ on~$\BBR^d$ is represented by the wedge~$\bigwedge$
 %$\bullet\leftarrow\bullet\to\bullet$ 
with Left~$<$ Right edge ordering, 
whereas the Schouten bracket is an endomorphism:
   %%% $\lshad\cdot,\cdot\rshad=\bullet\leftarrow\bullet + \bullet\rightarrow\bullet$ 
\begin{multline*}
\pi_S(F,G) =  \Or\bigl( \stackrel{\mathsf{1}}{ {\bullet} } \!\!\!{-}\!{-}\!\!\! \stackrel{\mathsf{2}}{ {\bullet} } 
\bigr)(F\otimes G)
= \tfrac{1}{2!} \Bigl( \frac{\vec{\dd}}{\dd\bxi}{\Bigr|}_{\mathsf{1}}\otimes
\frac{\vec{\dd}}{\dd\bx}{\Bigr|}_{\mathsf{2}} +
\frac{\vec{\dd}}{\dd\bxi}{\Bigr|}_{\mathsf{2}}\otimes
\frac{\vec{\dd}}{\dd\bx}{\Bigr|}_{\mathsf{1}} \Bigr)
\bigl( F{\bigr|}_{\mathsf{1}}\cdot G{\bigr|}_{\mathsf{2}} + 
F{\bigr|}_{\mathsf{2}}\cdot G{\bigr|}_{\mathsf{1}} \bigr) ={}\\
{}=
\frac{\vec{\dd}}{\dd\bxi}(F)\cdot \frac{\vec{\dd}}{\dd\bx}(G)
+(-)^{|F|} \frac{\vec{\dd}}{\dd\bx}(F)\cdot \frac{\vec{\dd}}{\dd\bxi}(G) 
=
 (-)^{|F|-1}\Bigl\{ (F)\frac{\cev{\dd}}{\dd\bxi} \cdot\frac{\vec{\dd}}{\dd\bx}(G)
 - (F)\frac{\cev{\dd}}{\dd\bx} \cdot\frac{\vec{\dd}}{\dd\bxi}(G) \Bigr\} ={}\\
{}=
(-)^{|F|-1}\lshad F,G\rshad = (-)^{|F|-1} \bigl\{ F \xrightarrow{\phantom{mmm}} G
\ \ -\ \ F \xleftarrow{\phantom{mmm}} G \bigr\},
\end{multline*}
for any homogeneous multivectors $F$ and~$G$.
It is readily seen from the definition that the endomorphism $\pi_S$ is graded\/-\/symmetric, \[\pi_S(F,G)=(-)^{|F|\cdot|G|}\pi_S(G,F),\] and the usual variant of the Schouten bracket is shifted\/-\/graded skew\/-\/symmetric, 
\[\lshad F,G\rshad=-(-)^{(|F|-1)\cdot(|G|-1)}\lshad G,F\rshad.\]
%%%
%Poisson structures~$\cP$ satisfy the master equation~$\lshad\cP,\cP\rshad=0$ on~$\BBR^d$, and the Schouten bracket~$\pi_S$ itself satisfies the graded Jacobi identity~$[\pi_S,\pi_S]_{\text{RN}}=0$ with respect to the Richardson\/--\/Nijenhuis bracket on the vector space of totally skew\/-\/symmetric endomorphisms.
%%%
This language of oriented graphs is used in Kontsevich's solution of the problem of deformation quantisation on finite\/-\/dimensional Poisson manifolds (see~\cite{Ascona96,KontsevichFormality} and~\cite{sqs15,cpp,Kiev18}). It provides also the construction of universal --\,with respect to all affine Poisson manifolds\,-- infinitesimal symmetries $\dot{\cP}=\cQ(\cP)$ of classical Poisson structures (see~\cite{Ascona96,Bourbaki2017} and~\cite{f16}).

The orientation morphism~$\Or$ associates Poisson cocycles $\cQ(\cP)\in\ker\lshad\cP,\cdot\rshad$ with $[\bullet\!{-}\!\bullet,\cdot]$-\/cocycles in the unoriented graph complex. The algebraic formula of~$\Or$ was given in~\cite{Ascona96}; its constructions was discussed in more detail by Jost in~\cite{Jost2013}, by Willwacher in~\cite{WillwacherGRT}, and by the authors in a recent paper~\cite{OrMorphism} (cf.~\cite{sqs19}).
This morphism is a tool which produces symmetries of Poisson brackets. 
  %hence it reveals hidden deformations of classical Liouville\/-\/integrable systems. 
If such a symmetry is not Poisson\/-\/exact in the second Poisson cohomology, then it yields deformations of classical Liouville\/-\/integrable systems, preserving the property that the dynamics is Poisson. %Work in progress / citation.
If the symmetry of a given bracket is Poisson\/-\/exact, then its flow produces a family of diffeomorphisms --\,typically, nonlinear\,-- of the affine Poisson manifold under study.
A construction of universal Poisson $1$-\/cocycles $\vec{\cX}=\Or(\gamma)(\vec{V},\cP^{\otimes^{n-1}})$ from graph cocycles $\gamma=\sum_a c_a\cdot\gamma_a$ with $n$ vertices in each term~$\gamma_a$ is introduced in~\cite{sqs19} for homogeneous Poisson bi\/-\/vectors $\cP=\mathrm{L}_{\vec{V}}(\cP)=\lshad\vec{V},\cP\rshad$.
%%%
(The domain of definition of~$\Or$ is of course larger than the homogeneous component of unoriented graphs on $n$ vertices and $2n-2$ edges, $n\in\BBN$.) So far, one example of its work, namely $\Or\colon \boldsymbol{\gamma}_3\in\ker\Id\mapsto \cQ_{1:\frac{6}{2}}(\cP)\in\ker\partial_\cP$, was known from the seminal paper~\cite{Ascona96} in which the tetrahedral flow was introduced (cf.~\cite{Bourbaki2017} and~\cite{f16}). The pentagon\/-\/wheel flow $\Or(\boldsymbol{\gamma}_5)(\cP)$ has been obtained in~\cite{sqs17} by solving the factorisation problem $\lshad\cP,\cQ_5(\cP)\rshad=\Diamond\bigl(\cP,\lshad\cP,\cP\rshad\bigr)$ with respect to the Leibniz graphs in~$\Diamond$ and the Kontsevich bivector graphs in $\cQ_5=\Or(\boldsymbol{\gamma}_5)$ on 6~internal vertices.
The heptagon\/-\/wheel flow $\cQ_7=\Or(\boldsymbol{\gamma}_7)$ on 8~internal vertices is described in~\cite{OrMorphism}.

The algebraic formula of the orientation morphism amounts to a simple but extensive calculation using $\BBZ/2\BBZ$-gradings, \textsl{i.e.}\ supermathematics. The morphism $\Or$ determines both the orgraph multiplicities and the signs of all the Kontsevich graphs $\Gamma$ (which, we recall, are equipped with an ordering Left~$<$ Right at all their internal vertices~$v$, such that $\Gamma\bigl(L<R{\bigr|}_v\bigr)=-\Gamma\bigl(R<L{\bigr|}_v\bigr)  %L>R
$). Nevertheless, the algebraic formula is external with respect to the (un)oriented graph complexes. 
We pose the question %It is arguable 
whether doing this %any such %(shifted-) 
$\BBZ_2$-\/graded %super
computation --\,involving thousands and millions of graphs\,-- 
is the only way to  %is actually done at all 
   %in order to let the existence and 
build each element of a known infinite sequence $\dot{\cP}=\cQ_{2\ell+1}(\cP)$ of 
flows.\footnote{Willwacher showed in~\cite{WillwacherGRT} that at every $\ell\in\BBN$, the $(2\ell+1)$-\/wheel graph marks a nontrivial $\Id$-\/cocycle in the unoriented graph complex, and those are expected to yield --\,for generic Poisson structures~$\cP$\,--
nontrivial terms in the respective Poisson cohomology under the orientation morphism. 
It remains an open problem to find a graph cocycle~$\gamma$ and Poisson bracket~$\cP$ on an affine manifold~$N^d$ such that the respective Poisson cocycle $\Or(\gamma)(\cP)$ is nontrivial.
Iterated commutators of the cocycles~$\boldsymbol{\gamma}_{2\ell+1}$ yield %the
Poisson cocycles as well.}

The aim of this note is to express the orientation morphism in geometric terms, that is, by using combinatorial data which are intrinsic with respect to the (un)oriented graphs~$\gamma_a$ equipped with edge orderings $I\prec II\prec\ldots$ in~$\gamma_a$ and wedge orderings $L_i<R_i$ at vertices~$v_i$ in~$\Or(\gamma_a)$, respectively.
%We let 
We let a cocycle $\gamma=\sum_a c_a\cdot\gamma_a\in\ker\Id$ on $n$~vertices and with a given %wedge 
ordering $I\prec II\prec III\prec\ldots$ of $2n-2$ edges in each graph~$\gamma_a$
be the initial datum.
%%%
The problem is to reveal the few rules of matching for the Kontsevich oriented graphs: whenever a sign %``$+$'' or ``$-$'' 
is fixed in front of just one Kontsevich graph which is obtained by %somehow 
orienting a given graph~$\gamma_a$ in a cocycle $\gamma=c_1\gamma_1+\ldots+c_m\gamma_m\in\ker\Id$, the signs in front of \textsl{all} admissible orientations of \textsl{all} the graphs in their linear combination~$\gamma$ are then %immediately 
determined without calculation of~$\Or$ for each of them. 

This paper contains three sections.
%%%
For consistency, in section~\ref{SecAlgFormula} we recall the algebraic formula of the orientation morphism~$\Or$ for the Kontsevich graph complex; the approach is operadic 
(cf.~\cite{Operads1999}).
%; formulas are %heavily 
%loaded with extra signs $\pm=(-)^\bullet$ which come from the postulated (shifts of) gradings on the various spaces under study. For the sake of brevity, we do not focus on such brought\/-\/in shifts of the gradings, 
%%%
In section~\ref{SecMultSignRules}, we establish the main rule for the count of multiplicities and signs of orgraphs.
%The task is to substantiate the rules which are the defining properties of the morphism~$\Or$.
In the final section, we expand on the rule of signs purely in terms of oriented Kontsevich graphs; to this end, we analyse their admissible types and combinatorial properties of transitions between them.

\section{The algebraic formula of graph orientation morphism~$\Or$}\label{SecAlgFormula}
\noindent%
Denote by $\Gra$ the real vector space of (formal sums\footnote{For the sake of definition, we assume by default that all terms in each sum are homogeneous w.r.t.\ the various gradings.}
%%%
of) unoriented, --\,by default, connected\,-- finite graphs~$\gamma=\sum_a c_a\cdot\gamma_a$ equipped with wedge ordering of edges $\sE(\gamma_a)=I\wedge II\wedge\ldots$ in each graph~$\gamma_a$; by definition, put $(\gamma$, $I\wedge II\wedge\ldots)=-(\gamma$, $II\wedge I\wedge\ldots)$, etc.
The vertices of a graph~$\gamma_a$ are not ordered; one is free to choose any labelling of 
these $\#\Vertices(\gamma_a)$ vertices by using $1$, $\ldots$, $n=\#\Vertices(\gamma_a)$ because the assignment $\wp_i\mapsto v_{\sigma(i)}$ of an $n$-\/tuple $\wp_1$, $\ldots$, $\wp_n$ of multivectors, which form %make 
the ordered set of graded arguments for an endomorphism defined in formula~\eqref{EqOr} below, will be provided by the entire permutation group $S_n\ni\sigma$: the $i$th multivector is placed into the vertex~$v_{\sigma(i)}$.

A \textsl{zero unoriented graph}~$\boldsymbol{0}$ is a graph which is equal to minus itself under an automorphism (that induces a permutation of edges whose parity we thus inspect). 

\begin{example}%For example, 
The two\/-\/edge connected graph $\bullet\!{-}\!{\bullet}\!{-}\!\bullet$ is a zero graph because the flip, a symmetry of this graph, swaps the edges which anticommute, $I\wedge II=-II\wedge I$, whence the graph at hand equals minus its image under its own automorphism, i.e.\ minus itself.
\end{example}

The operadic insertion ${\circ}_i\colon (\gamma_1$, $\sE(\gamma_1))\otimes(\gamma_2$, $\sE(\gamma_2))\mapsto(\gamma_1\mathbin{{\circ}_i}\gamma_2$, $\sE(\gamma_1)\wedge \sE(\gamma_2))$ is the Leibniz\/-\/rule sum of insertions of~$\gamma_1$ into vertices of~$\gamma_2$ such that 
every %the 
edge which was incident to a vertex~$v$ in~$\gamma_2$ is redirected to vertices in~$\gamma_1$ in all possible ways according to another Leibniz rule. The super Lie bracket $[\gamma_1,\gamma_2]=\gamma_1\mathbin{{\circ}_i}\gamma_2-(-)^{\#\sE(\gamma_1)\cdot\#\sE(\gamma_2)}\gamma_2\mathbin{{\circ}_i}\gamma_1$ on~$\Gra$ yields the vertex\/-\/expanding differential~$\Id=[\bullet\!{-}\!\bullet,\cdot]$.

Denote by $\End %^{\text{skew}} 
\bigl(T_{\text{poly}}(\BBR^d)\bigr)$ the vector space of %skew
graded\/-\/symmetric endomorphisms on the space of %totally skew\/-\/symmetric 
multivector %polyvector 
fields on the space~$\BBR^d$ (which we view here as an affine manifold). In a full analogy with graphs (see~\cite{OrMorphism}), one inserts a given endomorphism into an argument slot of another endomorphism, proceeding over the slots consecutively and taking the sum.
The Richardson\/--\/Nijenhuis bracket $[\cdot,\cdot]_{\text{RN}}$ is the %skew
graded %\/-\/
symmetrisation (with respect to its multivector arguments) of the difference of insertions: 
$[\pi,\rho]=\Sym %Alt
\bigl(\pi\circ\rho-(-)^{|\pi|\cdot|\rho|}%\bullet 
\rho\circ\pi\bigr)$.
The Schouten bracket $\pi_S(R,S) = (-)^{|R|-1}\,
\lshad R %\cdot
,S %\cdot
\rshad %\mathrel{{=}{:}}
$ of multivectors%polyvectors
~$R$,\ $S$ is a %an 
%shifted\/-\/
graded %skew\/-\/
symmetric
endomorphism which satisfies the (shif\-ted\mbox{-)} graded Jacobi identity $[\pi_S,\pi_S]_{\text{RN}}=0$; hence by the graded Jacobi identity for the Richardson\/--\/Nijenhuis bracket (see~\cite{Identities18}), the operation~$[\pi_S,\cdot]_{\text{RN}}$ is a differential.

For a given graph $\gamma$ on $n$ vertices and for an ordered $n$-\/tuple $\boldsymbol{\wp}$ of multivectors $\wp_1$, $\ldots$, $\wp_n$,
the morphism~$\Or\colon\Gra\to\End %^{\text{skew}}
\bigl(T_{\text{poly}}(\BBR^d)\bigr)$ is encoded by the formula
\begin{equation}\label{EqOr}
\Or(\gamma)(\wp_1\otimes\dots\otimes\wp_{ n %\#\text{Vert}(\gamma)
}) =
\tfrac{1}{n!} \sum\limits_{\sigma\in S_n}
\prod\limits_{e_{ij}\in \sE(\gamma)} \vec{\Delta}_{ij}\, 
\bigl(\wp_1{\bigr|}_{v_{\sigma(1)}} \cdot\ldots\cdot \wp_n %{\#\text{Vert}(\gamma)}
 {\bigr|}_{v_{\sigma(n)}} \bigr),
\end{equation}
where, in the ordered product of edges, the first edge $I\in \sE(\gamma)$ acts first and the last edge acts last. For every edge $e_{ij}$ connecting the vertices $v_i$ and~$v_j$ in the graph~$\gamma$, the edge operator, 
\[
\vec{\Delta}_{ij}=
\sum_{\alpha=1}^{\dim\BBR^d} \bigl\{ i \xrightarrow{\alpha} j + i \xleftarrow{\alpha} j \bigr\} = 
\sum_{\alpha=1}^{\dim\BBR^d} \Bigl(
\frac{\vec{\dd}}{\dd \xi_\alpha^{(i)}}\otimes \frac{\vec{\dd}}{\dd x^\alpha_{(j)}} 
 +
\frac{\vec{\dd}}{\dd x^\alpha_{(i)}}\otimes \frac{\vec{\dd}}{\dd \xi_\alpha^{(j)}} 
\Bigr)
\]%\colon {\bullet}\!{-}\!{-}\!{\bullet} \longmapsto 
acts from % is the ``edge'' operator which acts --\,from the 
left to right by the (graded-) derivations along the ordered sequence of multivectors
$\wp_1\otimes\ldots\otimes\wp_n$, now placed by a permutation $\sigma\in S_n$ into the respective vertices $v_{\sigma(1)}$, $\ldots$, $v_{\sigma(n)}$.
   %\,-- on the~$i$th and~$j$th polyvectors~$\wp_i$ and~$\wp_j$ in the ordered product of polyvectors contained in the graph vertices.
%%%
In this way, the derivations in each edge operator $\vec{\Delta}_{ij}$ reach the content
$\wp_{\sigma^{-1}(i)}$ and $\wp_{\sigma^{-1}(j)}$ of the vertices $v_i$ and~$v_j$.
Summarizing, the graph with an edge ordering $(\gamma,\sE(\gamma))$ is the set of topological data which determine all the portraits of %paired %pairwise 
couplings within the $n$-\/tuple of multivectors:
the above formula is referred to a $d$-\/tuple $\{x^\alpha\}$ of affine coordinates on~$\BBR^d$ and the canonical conjugate $d$-\/tuple $\{\xi_\alpha\}$ of fibre coordinates on the parity\/-\/odd cotangent bundle~$\Pi T^*\BBR^d$, so that the summation $\sum_{\alpha_{ij}=1}^{\dim\BBR^d}$ gives the coupling along the edge~$e_{ij}$.
(Clearly, one proceeds by linearity over sums of graphs~$\gamma$ and over sums of multivectors $\wp_i$ at every $i$ running from $1$ to~$n$.)

\begin{lemma}\label{LemmaZeroWellDefIf}%It can be seen that 
Suppose that at most one --\,in the rest of this paper, none usually, still exactly one 
only in Corollary~\ref{CorDiamond} below and likewise in~\cite{sqs19}\,-- multivector $\wp_1$, $\ldots$, $\wp_{\#\Vertices(\cZ\in\Gra)}$ is odd\/-\/graded, while all the rest are even\/-\/grading (so that all are permutable without signs appearing).
Then, for the sets of endomorphisms' arguments restricted in this way,
the morphism~$\Or$ in~\eqref{EqOr} is well defined on the space~$\Gra$, that is, its %the 
action gives %application 
\begin{multline*}
\Or\,(\text{zero graph }\cZ) ={}\\ 
{}= (0\in\BBR)\cdot(\text{nonzero oriented graphs})+(\text{coeffs}\in\BBR)\cdot(\text{zero Kontsevich orgraphs}).
\end{multline*}
\end{lemma}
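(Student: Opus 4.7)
The plan is to exploit the hypothesis that $\cZ$ is a zero graph in order to set up a self\/-\/cancellation inside formula~\eqref{EqOr}. By definition, there exists an automorphism $\phi\in\Aut(\cZ)$ whose induced action on the ordered edge set $\sE(\cZ)=I\wedge II\wedge\ldots$ is an odd permutation; I would fix one such $\phi$ once and for all. The strategy is to show that an appropriate reindexing of the summation $\sum_{\sigma\in S_n}$ in~\eqref{EqOr} recovers the same output of $\Or$ multiplied by $\sign(\phi|_{\sE(\cZ)})=-1$, modulo terms supported on zero Kontsevich orgraphs.

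The key step is a change of variables $\sigma\mapsto\phi\circ\sigma$ in the sum. The range $S_n$ is unchanged, but in every summand the content of the vertex $v_{\phi(j)}$ becomes $\wp_{\sigma^{-1}(j)}$, which amounts to a global relabeling of vertices of $\cZ$ by $\phi$. Since $\phi$ is a graph automorphism, the set of edges is preserved, but the ordered product $\prod_{e_{ij}\in\sE(\cZ)}\vec{\Delta}_{ij}$ now appears with its edges listed in the permuted order induced by $\phi$. Each $\vec{\Delta}_{ij}$ contains in every summand exactly one parity\/-\/odd derivation $\vec{\dd}/\dd\xi_\alpha$, hence each edge operator is itself parity odd; reordering the product therefore contributes the Koszul sign $\sign(\phi|_{\sE(\cZ)})=-1$. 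Next I would invoke the hypothesis that at most one of $\wp_1,\ldots,\wp_n$ is parity odd: the factors $\wp_1|_{v_{\sigma(1)}}\cdot\ldots\cdot\wp_n|_{v_{\sigma(n)}}$ then sit in a strictly graded\/-\/commutative product in which no Koszul sign appears when the single odd factor is moved past any of the even factors. Combining these two observations yields the identity
\[
\Or(\cZ)(\wp_1\otimes\cdots\otimes\wp_n) = -\,\Or(\cZ)(\wp_1\otimes\cdots\otimes\wp_n) + \Sigma_\phi,
\]
where $\Sigma_\phi$ collects those output Kontsevich orgraphs on which the vertex relabeling by~$\phi$ descends to a nontrivial wedge\/-\/parity\/-\/odd automorphism, i.e.\ precisely to zero Kontsevich orgraphs.

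The principal obstacle, and where I would invest the most care, lies in this last clause: verifying that the displayed identity, valid a priori at the level of graded differential operators on $\Pi T^*\BBR^d$, descends to a term\/-\/by\/-\/term matching in the oriented graph complex, and that the residue $\Sigma_\phi$ is genuinely supported on zero Kontsevich orgraphs in the sense of the wedge orderings $L_i<R_i$ at output vertices. One would partition the admissible orientations of $\cZ$ into $\phi$-\/orbits and check that every orbit either yields a pair of Kontsevich orgraphs whose signs cancel, or a single $\phi$-\/invariant orgraph inheriting from~$\phi$ an automorphism that is odd on its own wedge ordering, which is therefore a zero Kontsevich orgraph. Once this orbit analysis is in place, the coefficient of every nonzero Kontsevich orgraph in $\Or(\cZ)$ satisfies $x=-x$ and must vanish, which is the content of the lemma.
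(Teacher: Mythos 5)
Your proposal is correct and follows essentially the same route as the paper: both arguments act by the parity\/-\/odd automorphism of the zero graph~$\cZ$, pick up the Koszul sign~$-1$ from reordering the anticommuting edge operators~$\vec{\Delta}_{ij}$ while the (pairwise permutable) multivector arguments move without signs, and conclude that $\Or(\cZ)$ equals minus itself. Your closing orbit analysis of the $2^{\#\sE(\cZ)}$ orientations is merely a more explicit rendering of the paper's final remark that whatever survives is supported on zero Kontsevich orgraphs.
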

%%% + forward reference to a stronger result "==0".

\begin{proof}
By definition, a zero unoriented graph~$\cZ$ has a symmetry $\sigma\in\Aut(\cZ)$ which acts by a parity\/-\/odd permutation~$\sigma_E$ on the wedge\/-\/ordered set~$\sE(\cZ)$ of its edges and by a permutation~$\sigma_V$ on the set~$\Vertices(\cZ)$ of its vertices.
The permutation of edges yields the same parity\/-\/odd permutation~$\sigma_E$ of 
the edge operators~$\Delta_{ij}$ in their ordered product $\vbDelta = \prod_{e_{ij}\in\sE(\cZ)}\vec{\Delta}_{ij}$.
The permutation $\sigma_V$ of the content of vertices does not yield any signs because by our assumption, all the multivectors are pairwise permutable.
%%%
Put $n=\#\Vertices(\cZ)$.
Acting by the symmetry~$\sigma$ of unoriented graph~$\cZ$ on the anticommuting %parity\/-\/odd 
edge operators and on the %parity\/-\/even bi\/-\/vectors~$\cP$, 
multivectors $\wp_1$, $\ldots$, $\wp_n$, which are assigned consecutively by all the $n!$ permutations $\tau\in S_n$ to %which are contained in 
the vertices of~$\cZ$ (and the ordered product of which is the argument of~$\vbDelta$), we obtain --\,for the isomorphic unoriented graph~$\sigma(\cZ)$\,-- the expression
\begin{multline*}
\sum_{\tau\in S_n} \prod_{e_{ij}\in\sE(\cZ)} \sigma_E(\vec{\Delta}_{ij})\: 
\Bigl( \prod_{\Vertices(\cZ)} \tau\bigl(\sigma_V(\boldsymbol{\wp})\bigr) \Bigr) 
={}\\
{}=  
%%%
\sum_{\tau\in S_n} \prod_{e_{ij}\in\sE(\cZ)} \vec{\Delta}_{\sigma_E(e_{ij})} \: 
\Bigl( \prod_{\Vertices(\cZ)} \tau\bigl(\sigma_V(\boldsymbol{\wp})\bigr) \Bigr) 
%%%
=
\sum_{\tau\in S_n} \bigl( - \vbDelta)\: \bigl( +\!\!\prod_{ %\alpha\in
\Vertices(\cZ)} \tau(\boldsymbol{\wp})   %\cP_{(\alpha)} 
\bigr).
\end{multline*}
This confirms that $\Or(\cZ)(\wp_1$,\ $\ldots$,\ $\wp_n)$ is a differential polynomial (w.r.t.\ the components of multivectors~$\wp_i$) which equals minus itself, hence it is identically zero. But this %is the 
defining property of the differential\/-\/polynomial expression
means that the Kontsevich orgraph that encodes the endomorphism $\Or(\cZ)$ is a zero orgraph.
\end{proof}

\begin{state}
Under the assumption --\,similar to the above in Lemma~\ref{LemmaZeroWellDefIf}\,-- that all the multivector arguments $\wp_i$ of the endomorphisms are in fact copies of a given Poisson bi\/-\/vector~$\cP$, 
the morphism~$\Or$ ``respects the operadic insertions'' %, therefore 
by sending the super Lie bracket of graphs to the Richardson\/--\/Nijenhuis bracket of 
%skew 
endomorphisms: 
\[
\Or\bigl([\gamma_1,\gamma_2]\bigr)=\bigl[\Or(\gamma_1),\Or(\gamma_2)\bigr]_{\text{RN}},
\]
where the right\/-\/hand side is graded %skew\/-\/
symmetrised by construction.

In particular, $\Or(\bullet\!{-}\!\bullet)=\pi_S$,
i.e.\ the orientation morphism sends the edge to the Schouten bracket, whence
\begin{equation}\label{EqDiffToSchoutenWith}
\Or\bigl( %[\bullet\!{-}\!\bullet,
\Id(\gamma)
%]
\bigr) = \bigl[\pi_S,\Or(\gamma)\bigr]_{\text{RN}}.
\end{equation}
This is standard (see Appendix~\ref{AppLieAlgMor} below and also%cf.
~\cite{Jost2013,Ascona96,Operads1999} or~\cite{OrMorphism}).
\end{state}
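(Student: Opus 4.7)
The plan is to reduce the Lie-bracket identity to a finer statement about the underlying operadic insertions: to establish first that
\begin{equation*}
\Or(\gamma_1 \mathbin{{\circ}_i} \gamma_2)(\cP,\ldots,\cP) = \bigl(\Or(\gamma_1) \mathbin{{\circ}_i} \Or(\gamma_2)\bigr)(\cP,\ldots,\cP),
\end{equation*}
and then to apply graded (anti)symmetrisation to both sides. Because all arguments are copies of the Poisson bi-vector $\cP$, which is parity-even in the super setup on $\Pi T^*\BBR^d$, no extra signs are produced by permuting arguments (cf.\ Lemma~\ref{LemmaZeroWellDefIf}); consequently, the sum over $S_n$ in formula~\eqref{EqOr} already realises the $\Sym$ operation that appears in the definition of the Richardson--Nijenhuis bracket. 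The base case $\Or(\bullet\!{-}\!\bullet)=\pi_S$ is an immediate one-line check from~\eqref{EqOr}, reproducing the computation sketched in the introduction.

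To prove the insertion identity, I would unwind the definition of $\gamma_1 \mathbin{{\circ}_i} \gamma_2$: it is the Leibniz-rule sum over all insertions of $\gamma_1$ into a vertex $v$ of $\gamma_2$, in which every edge formerly incident to $v$ is redirected to the vertices of the inserted copy of $\gamma_1$ in all possible ways. Under $\Or$, each such insertion is mirrored on the analytic side by placing the differential polynomial $\Or(\gamma_1)(\cP,\ldots,\cP)$ into the slot formerly occupied by one copy of $\cP$ at $v$; the Leibniz rule for edge redirection then coincides term-by-term with the Leibniz rule satisfied by each edge operator $\vec{\Delta}_{e}$ acting on the product of $\cP$s that now sits in that slot. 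Summing over vertices $v$ of $\gamma_2$ reconstitutes the full operadic insertion on both sides.

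Taking the graded commutator $[\gamma_1,\gamma_2]=\gamma_1\mathbin{{\circ}_i}\gamma_2-(-)^{\#\sE(\gamma_1)\cdot\#\sE(\gamma_2)}\gamma_2\mathbin{{\circ}_i}\gamma_1$ on the graph side and the symmetrised difference $[\Or(\gamma_1),\Or(\gamma_2)]_{\text{RN}}$ on the endomorphism side, and inserting the identity above into each of the two summands, yields the main formula. The special case $\gamma_1=\bullet\!{-}\!\bullet$, combined with $\Or(\bullet\!{-}\!\bullet)=\pi_S$, then delivers~\eqref{EqDiffToSchoutenWith}.

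The main obstacle -- and the only genuinely delicate part -- is the bookkeeping of signs. Each edge operator $\vec{\Delta}_e$ has total parity-odd degree (one $\vec{\dd}/\dd\bxi$ paired with one $\vec{\dd}/\dd\bx$), so commuting it past another edge operator, or past a copy of the (even) bi-vector $\cP$, incurs Koszul signs which must be tracked against the signs coming from the wedge ordering $\sE(\gamma_1)\wedge\sE(\gamma_2)$ on the graph side. The factor $(-)^{\#\sE(\gamma_1)\cdot\#\sE(\gamma_2)}$ in the super Lie bracket arises precisely as the sign picked up when the ordered chain $\vbDelta_{\gamma_1}$ is moved past $\vbDelta_{\gamma_2}$. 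Verifying this match edge-ordering by edge-ordering is standard but is what carries the whole argument through; once it is done, the statement follows at once.
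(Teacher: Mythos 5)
Your proposal is correct and follows essentially the same route as the paper's Appendix~\ref{AppLieAlgMor}: reduce the bracket identity to compatibility of $\Or$ with a single operadic insertion, match the Leibniz\/-\/rule redirection of edges on the graph side with the Leibniz action of the edge operators on the differential polynomial $\Or(\gamma_1)(\cP,\ldots,\cP)$ placed into a vertex slot, and track the Koszul signs of the parity\/-\/odd edge operators against the wedge ordering $\sE(\gamma_1)\wedge\sE(\gamma_2)$. Like the paper --- which declares the statement ``standard'' and gives only a combinatorial sketch for the closely related Proposition~\ref{LemmaCommutatorGraSym} --- you defer the edge\/-\/by\/-\/edge sign verification, but the decomposition and the mechanism you identify are exactly the ones the paper uses.
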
 

\begin{cor}\label{CorDiamond}
Evaluating the endomorphisms %polyvectors 
in both sides of Lie superalgebra morphism~\eqref{EqDiffToSchoutenWith}
at %the respective number 
$n+1$ %of 
copies of a given Poisson bivector~$\cP$ satisfying $\lshad\cP,\cP\rshad=0$ on~$\BBR^d$,
\[
\Or\bigl([\bullet\!{-}\!\bullet,\gamma]\bigr)(\cP,\ldots,\cP) =
2 \lshad\cP,\Or(\gamma)(\cP,\ldots,\cP)\rshad -
\sum_i \Or(\gamma)\bigl(\cP,\ldots,\cP,\smash{\underbrace{\lshad\cP,\cP\rshad,}_{\text{$i$th slot}}}\cP,\ldots,\cP\bigr),
\]
one obtains 
--\,for a cocycle~$\gamma$ on $n$~vertices and $2n-2$~edges\,--
an explicit solution of the factorisation problem,
\begin{equation}\label{EqDiamond}
\lshad\cP,\Or(\gamma)(\cP)\rshad=\Diamond\bigl(\cP,\lshad\cP,\cP\rshad\bigr),
\end{equation}
for the Poisson cocycles $\cQ(\cP)=\Or(\gamma)(\cP)$ corresponding to $[\bullet\!{-}\!\bullet,\cdot]$-\/cocycles in the unoriented graph complex.

Otherwise speaking, the orientation morphism~$\Or$ sends $\Id$-\/co\-cy\-cle graphs to $\dd_\cP$-\/co\-cy\-c\-les (\textsl{i.e.}\ Poisson cocycles). Moreover, $\Id$-\/exact unoriented graphs $\gamma=\Id(\zeta)$ yield $\dd_\cP$-\/co\-boun\-da\-ries $\cQ(\cP)=\lshad\cP,\cX(\cP)\rshad$ with one\/-\/vectors $\cX(\cP)=2\cdot\Or(\zeta)(\cP)$
(modulo the improper terms which, by definition, vanish on the entire~$\BBR^d$ whenever a bivector~$\cP$ is Poisson).
\end{cor}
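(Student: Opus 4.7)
My plan is to derive the corollary as a direct consequence of the Lie superalgebra morphism~\eqref{EqDiffToSchoutenWith}, by substituting $n+1$ copies of $\cP$ into both sides and unpacking the Richardson--Nijenhuis bracket on the right. The starting point is $\Or([\bullet\!{-}\!\bullet,\gamma]) = [\pi_S, \Or(\gamma)]_{\text{RN}}$, which is assumed (the preceding Proposition). So the task reduces to computing the right\/-\/hand side at $\boldsymbol{\wp} = (\cP,\ldots,\cP)$.

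First, I would expand $[\pi_S, \Or(\gamma)]_{\text{RN}} = \Sym\bigl(\pi_S \circ \Or(\gamma) - (-)^{|\pi_S|\cdot|\Or(\gamma)|}\Or(\gamma) \circ \pi_S\bigr)$. In the first summand the single output $\Or(\gamma)(\cP^{\otimes n})$ is inserted into one of the two arguments of $\pi_S$; because $\pi_S$ is graded\/-\/symmetric in its two inputs and both relevant inputs (the remaining $\cP$ and $\Or(\gamma)(\cP^{\otimes n})$) carry the same shifted parity when $\gamma$ has $2n-2$ edges, the two placements give equal contributions, producing the coefficient $2$ in front of $\pi_S\bigl(\cP, \Or(\gamma)(\cP^{\otimes n})\bigr)$. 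Converting $\pi_S$ to the Schouten bracket via $\pi_S(R,S) = (-)^{|R|-1}\lshad R,S\rshad$ (as shown in the introductory computation of $\pi_S$) then yields the first term $2\lshad\cP, \Or(\gamma)(\cP^{\otimes n})\rshad$ on the right\/-\/hand side. In the second summand the output $\pi_S(\cP,\cP) = \pm\lshad\cP,\cP\rshad$ is inserted, one slot at a time, into the $n$ arguments of $\Or(\gamma)$; the symmetrisation over the remaining $n-1$ copies of $\cP$ is trivial, so this precisely produces $\sum_i \Or(\gamma)(\cP,\ldots,\lshad\cP,\cP\rshad,\ldots,\cP)$ with the minus sign from the RN commutator. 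This establishes the displayed evaluation formula of the corollary.

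The two consequences then follow without further work. When $\gamma \in \ker\Id$, the left\/-\/hand side $\Or(\Id\gamma)(\cP^{\otimes n})$ is identically zero, and rearranging the evaluation formula gives \eqref{EqDiamond} with $\Diamond\bigl(\cP,\lshad\cP,\cP\rshad\bigr) := \tfrac{1}{2}\sum_i \Or(\gamma)\bigl(\cP,\ldots,\lshad\cP,\cP\rshad,\ldots,\cP\bigr)$, each term being a Leibniz graph in which $\lshad\cP,\cP\rshad$ occupies one vertex. For an exact graph $\gamma = \Id(\zeta)$, apply the evaluation formula to $\zeta$: one finds $\Or(\gamma)(\cP^{\otimes n}) = 2\lshad\cP, \Or(\zeta)(\cP^{\otimes(n-1)})\rshad - \sum_i \Or(\zeta)(\cP,\ldots,\lshad\cP,\cP\rshad,\ldots,\cP)$. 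On the locus where $\lshad\cP,\cP\rshad = 0$ (i.e.\ for Poisson $\cP$) the second sum vanishes identically on~$\BBR^d$ and is thus improper in the sense indicated; what remains is the coboundary $\cQ(\cP) = \lshad\cP,\cX(\cP)\rshad$ with $\cX(\cP) = 2\,\Or(\zeta)(\cP)$.

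The main obstacle is the careful sign bookkeeping in the graded symmetrisation of the Richardson--Nijenhuis bracket when all inputs coincide: in particular, extracting the coefficient $2$ in front of $\lshad\cP,\Or(\gamma)(\cP^{\otimes n})\rshad$ (versus $1$ in front of the sum over slots) requires distinguishing the trivial ordinary permutation symmetry of the identical $\cP$\/-\/inputs from the nontrivial $\BBZ/2\BBZ$\/-\/graded signs that the RN bracket inserts when the even endomorphism $\pi_S$ acts on arguments of shifted\/-\/odd parity. Once this sign tracking is settled --- and Lemma~\ref{LemmaZeroWellDefIf} guarantees well\/-\/defined\-ness of $\Or$ on the relevant restricted input --- the corollary is just a reading of~\eqref{EqDiffToSchoutenWith} at $(\cP,\ldots,\cP)$.
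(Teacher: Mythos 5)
Your proposal is correct and follows essentially the same route as the paper: the corollary is obtained by evaluating the morphism~\eqref{EqDiffToSchoutenWith} at $n+1$ copies of~$\cP$, splitting the Richardson--Nijenhuis bracket into its two insertion terms (the factor~$2$ coming from the graded symmetry of~$\pi_S$ in its two slots, the sum over slots coming from inserting $\pi_S(\cP,\cP)$ into~$\Or(\gamma)$), and then reading off~\eqref{EqDiamond} for $\gamma\in\ker\Id$ and the coboundary statement for $\gamma=\Id(\zeta)$. The only nit is that $\pi_S(\cP,\cP)=(-)^{|\cP|-1}\lshad\cP,\cP\rshad=-\lshad\cP,\cP\rshad$ is not sign\/-\/ambiguous, but since that term is absorbed into the definition of~$\Diamond$ and vanishes identically for Poisson~$\cP$, this looseness affects none of the conclusions.
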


\begin{example}%Examples of the 
The factorisation operators~$\Diamond$ in~\eqref{EqDiamond} for the tetrahedral flow $\dot{\cP}=\cQ_{1:\frac{6}{2}}(\cP)=\Or(\boldsymbol{\gamma}_3)(\cP)$, 
pentagon\/-\/wheel flow $\Or(\boldsymbol{\gamma}_5)(\cP)$,
and for the heptagon\/-\/wheel flow $\Or(\boldsymbol{\gamma}_7)(\cP)$
have been presented in~\cite{f16}, \cite{sqs17}, and~\cite{OrMorphism},
respectively.\footnote{Let us emphasize that such operators are in general not unique. For instance, there are known solutions~$\Diamond$ besides the subtrahend in the right\/-\/hand side of~\eqref{EqDiffToSchoutenWith}. E.g., the operator~$\Diamond$ in~\eqref{EqDiamond} for the orgraph sum~$\Or(\boldsymbol{\gamma}_5)(\cP)$ is different from the factorisation found in~\cite{sqs17}. Likewise, two linearly independent solutions of~\eqref{EqDiamond} for the tetrahedral flow $\Or(\boldsymbol{\gamma}_3)(\cP)$ are built in~\cite{f16}.%
%(No explanation is available yet for this empiric fact.) 
}
%%%
%Reciprocally, deeper insight into the structure of solutions to the Poisson cocycle equation~\eqref{EqDiamond} could tell us more about the space of $\mathfrak{grt}$-\/originating cocycles $\boldsymbol{\gamma}_{2\ell+1}\in\ker\Id$ in the unoriented graph complex.
\end{example}

\begin{state}[proved in App.~\protect\ref{AppLieAlgMor}]\label{LemmaCommutatorGraSym}
For a given Poisson bi\/-\/vector $\cP$, the graph orientation mapping, 
\[\Or(\cdot)(\cP)\colon \ker \Id{\bigr|}_{(n,2n-2)} \ni \gamma \mapsto \cQ(\cP) \in \ker \partial_\cP,\] 
is a Lie superalgebra morphism that takes the bracket of two cocycles in bi\/-\/grading $(n,2n-2)$ to the commutator $[\frac{d}{d\varepsilon_1}, \frac{d}{d\varepsilon_2}](\cP)$ of two symmetries $\frac{d}{d\varepsilon_i}(\cP) = \cQ_i(\cP)$.\footnote{%
By Brown %Brown's theorem 
\cite{Brown2012}, the commutator does in general --\,for a generic Poisson structure~$\cP$\,-- not vanish for Willwacher's odd\/-\/sided wheel cocycles.} %%% % TODO: depends on Poisson structure
\end{state}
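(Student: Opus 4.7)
The plan is to reduce the statement to the Lie algebra morphism property $\Or([\gamma_1,\gamma_2]) = [\Or(\gamma_1),\Or(\gamma_2)]_{\text{RN}}$ already established in the preceding proposition, and then to interpret the Richardson--Nijenhuis bracket, when evaluated at copies of the Poisson bivector~$\cP$, as the commutator of the two associated evolutionary derivations. First I would fix $\gamma_1,\gamma_2 \in \ker \Id$ in bi\/-\/grading $(n,2n-2)$; by Corollary~\ref{CorDiamond}, each produces a Poisson cocycle $\cQ_i(\cP) = \Or(\gamma_i)(\cP,\ldots,\cP) \in \ker\partial_\cP$, hence an evolutionary derivation $\tfrac{d}{d\varepsilon_i}$ on the algebra of differential polynomials in~$\cP$ and the fibre coordinates~$\xi_\alpha$ of $\Pi T^*\BBR^d$, characterised by $\tfrac{d}{d\varepsilon_i}(\cP) = \cQ_i(\cP)$ and extended by the graded Leibniz rule and by commutation with $\tfrac{\partial}{\partial x^\alpha}$.

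Next, I would evaluate both sides of the identity $\Or([\gamma_1,\gamma_2]) = [\Or(\gamma_1),\Or(\gamma_2)]_{\text{RN}}$ at the tuple $\cP^{\otimes(2n-1)}$. By the definition of the Richardson--Nijenhuis bracket as the graded symmetrisation of the difference of operadic insertions, and after using $\lshad\cP,\cP\rshad = 0$ to discard the contributions that would otherwise arise exactly as in Corollary~\ref{CorDiamond}, the right\/-\/hand side reduces --\,modulo improper terms\,-- to
\[
\tfrac{d}{d\varepsilon_1}\bigl(\cQ_2(\cP)\bigr) - (-)^{|\gamma_1|\cdot|\gamma_2|}\,\tfrac{d}{d\varepsilon_2}\bigl(\cQ_1(\cP)\bigr).
\]
The key combinatorial identity underlying this reduction is that the operadic insertion of $\Or(\gamma_i)$ into one argument slot of $\Or(\gamma_j)$, evaluated at copies of~$\cP$ and summed over insertion positions, reproduces the Fr\'echet derivative of the differential polynomial $\cQ_j(\cP)$ in the direction of the evolutionary vector field generated by $\cQ_i(\cP)$; this is precisely the graded Leibniz rule applied to the $n$-fold product of $\cP$'s sitting at the vertices of $\gamma_j$.

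The main obstacle will be the combinatorial bookkeeping: one has to match the normalisation factor $\tfrac{1}{(2n-1)!}$ produced when formula~\eqref{EqOr} is applied directly to $[\gamma_1,\gamma_2] \in \Gra$ on $2n-1$ vertices against the factors $\tfrac{1}{n!}\cdot\tfrac{1}{n!}$ coming from the two insertions on the right\/-\/hand side, together with the combinatorial multiplicity from the sum over insertion positions, and one has to track the parity signs produced by the anticommuting edge operators; this is the same type of calculation carried out in Appendix~\ref{AppLieAlgMor}. Once the equality $\Or([\gamma_1,\gamma_2])(\cP) = [\tfrac{d}{d\varepsilon_1},\tfrac{d}{d\varepsilon_2}](\cP)$ has been established, the super Jacobi identity for the super Lie bracket on~$\Gra$ at the operadic level and the super Jacobi identity for the commutator of evolutionary derivations jointly confirm that $\Or(\cdot)(\cP)$ is a Lie superalgebra morphism from $(\ker\Id{\bigr|}_{(n,2n-2)}, [\cdot,\cdot])$ to $(\ker\partial_\cP, [\cdot,\cdot])$, completing the proof.
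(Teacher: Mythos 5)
Your plan is correct in substance, but it is organised differently from the paper's own argument in Appendix~\ref{AppLieAlgMor}. You factor the claim through the endomorphism-level identity $\Or([\gamma_1,\gamma_2])=[\Or(\gamma_1),\Or(\gamma_2)]_{\text{RN}}$ and then identify the Richardson--Nijenhuis bracket, evaluated at copies of~$\cP$, with the commutator of evolutionary derivations via the Leibniz rule (slot insertion $=$ Fr\'echet derivative of the differential polynomial $\cQ_j(\cP)$ along $\cQ_i(\cP)$); that identification is exactly right and is the crux. The paper instead proves the evaluated identity $[\cQ_1(\cP),\cQ_2(\cP)]_{\text{flows}}=\Or\bigl([\gamma_1,\gamma_2]\bigr)\bigl(\cP^{\otimes(n_1+n_2-1)}\bigr)$ directly at the level of oriented graphs, matching minuend with minuend and subtrahend with subtrahend: the technical core is an edge count showing that when $\gamma_1\mathbin{\vec{\circ}}\gamma_2$ is oriented, all $2n_1-2$ body edges of the inserted subgraph $\gamma_1$ must be oriented by wedges at its own vertices, so exactly two arrows leave it and the subgraph reproduces the bi\/-\/vector $\cQ_1(\cP)$ placed into a vertex of $\cQ_2(\cP)$, together with a check that the ordering $\sE(\gamma_1)\wedge\sE(\gamma_2)$ dictates the relative signs and that skew\/-\/symmetry in the two sinks is restored by the mechanism of Corollary~\ref{CorSkewGuaranteed}. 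Your route is more economical conceptually, but note that the paper's warrant for the preceding proposition is this same Appendix, so deferring to it does not actually discharge the combinatorial bookkeeping you rightly flag as the main obstacle. Two minor points: the appeal to $\lshad\cP,\cP\rshad=0$ is not needed when evaluating the bracket of two general cocycles (no $\lshad\cP,\cP\rshad$ insertion arises unless one factor is the edge $\bullet\!{-}\!\bullet$); it is only used to know that each $\cQ_i(\cP)$ is itself a Poisson cocycle so that the flows are well defined on Poisson bi\/-\/vectors. And the closing appeal to the two Jacobi identities is superfluous: once $\Or([\gamma_1,\gamma_2])(\cP)=[\tfrac{d}{d\varepsilon_1},\tfrac{d}{d\varepsilon_2}](\cP)$ is established, the morphism property is already proved.
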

%TODO: proof.

\section{Kontsevich orgraphs: The count of multiplicicities and signs}\label{SecMultSignRules}
\noindent%
In this section we derive the rule for %the
calculation of %the 
orgraph multiplicities and matching %of 
the signs of oriented Kontsevich graphs.
This %set of 
defining property completely determines the evaluation $\Or(\gamma)(\cP)$ of the orientation morphism for cocycles~$\gamma\in\ker\Id$ at (tuples of) Poisson bivectors~$\cP$.

\begin{convent}
In the sequel, for a given graph~$\gamma_a$ in a cocycle $\sum_a c_a\cdot\gamma_a=\gamma\in\ker\Id$,
sums are taken over the big set of $2^{\#\sE(\gamma_a)}$ of ways to orient edges of~$\gamma_a$ by using the operator~$\vbDelta$.
\end{convent}
   %subset of admissible oriented graphs (built of wedges) from the 

Not every such way to orient edges would yield a Kontsevich orgraph built of $n$ wedges (hence, having $n$ internal vertices, $2n$ arrow edges formed by $2n-2$ old edges of~$\gamma_a$ and $2$ new edges $S_0$, $S_1$ to the new sink vertices~$\mathsf{0}$, $\mathsf{1}$ with their content~$f$, $g$). %Indeed, 
But the Kontsevich orgraphs are selected automatically whenever
a copy of bi\/-\/vector~$\cP=\tfrac{1}{2} P^{ij}(\boldsymbol{x})\,\xi_i\xi_j$ is placed in each internal vertex of every orgraph, so that every vertex of~$\gamma_a$ is the arrowtail for exactly two oriented edges.

\begin{convent}
Every Kontsevich orgraph on $n$ internal vertices and $2n$ edges is 
encoded by the ordered (using an arbitrary fixed labelling of internal vertices) list of ordered (w.r.t.\ Left~$<$ Right at a vertex) pairs of target \textsl{vertices} for the respective outgoing edges.
\end{convent}

\begin{define}\label{DefMustache}
A Kontsevich orgraph (built of wedges) is a \emph{$\Lambda$-\/shaped} orgraph if one vertex is the arrowtail of both edges directed to the sinks. Otherwise, a Kontsevich orgraph --\,in which the edges to sinks are issued from different %internal
vertices\,-- is called a \emph{$\Pi$-\/shaped} orgraph (see Fig.~\ref{FigTetra}).
\end{define}

\begin{figure}[htb]
\begin{center}
{\unitlength=1mm
\begin{picture}(30,45)(-15,-25)%(a)
\put(-16,18){$(a)$}
\put(-15,0){\vector(1,0){30}}
\put(0,20){\vector(-3,-4){15}}
\put(15,0){\vector(-3,4){15}}
\put(0,20){\vector(0,-1){29}}
\put(-15,0){\vector(3,-2){14}}
\put(15,0){\vector(-3,-2){14}}
\put(0,-10){\vector(3,-4){7.5}}
\put(0,-10){\vector(-3,-4){7.5}}
%%%
\put(-16,-1){\llap{{\footnotesize 3}}}
\put(16,-1){{{\footnotesize 4}}}
\put(-0.75,-13.5){{{\footnotesize 2}}}
\put(-8,-21){\llap{{\footnotesize 0}}}
\put(8,-21){{{\footnotesize 1}}}
\put(-0.5,20.5){{{\footnotesize 5}}}
%%%
\put(-9,-7.5){\llap{\small I}}
\put(9,-7.5){\small II}
\put(0.75,6){\small III}
\put(-8,0.75){\small IV}
\put(-10,7){\llap{\small V}}
\put(10,7){\small VI}
\put(-4,-15){\llap{\small $S_0$}}
\put(4,-15){\small $S_1$}
%%%
\end{picture}%(a)
\qquad\raisebox{24mm}{$-\mathbf{3}\cdot$}\qquad
\begin{picture}(30,45)(-15,-25)%(b)
\put(-16,18){$(b)$}
\put(-15,0){\vector(1,0){30}}
\put(0,20){\vector(-3,-4){15}}
\put(15,0){\vector(-3,4){15}}
\put(0,20){\vector(0,-1){29}}
\put(15,0){\vector(-3,-2){15}}
\put(0,-10){\vector(-3,2){14}}
\put(-15,0){\vector(0,-1){10}}
\put(0,-10){\vector(0,-1){10}}
%%%
\put(-16,-1){\llap{{\footnotesize 3}}}
\put(16,-1){{{\footnotesize 4}}}
\put(0.75,-12.5){{{\footnotesize 2}}}
\put(-0.75,-21){\llap{{\footnotesize 0}}}
\put(-14.25,-11){{{\footnotesize 1}}}
\put(-0.5,20.5){{{\footnotesize 5}}}
%%%
\put(-9,-7.5){\llap{\small I}}
\put(9,-7.5){\small II}
\put(0.75,6){\small III}
\put(-8,0.75){\small IV}
\put(-10,7){\llap{\small V}}
\put(10,7){\small VI}
\put(0.5,-17){{\small $S_0$}}
\put(-15.5,-7){\llap{{\small $S_1$}}}
%%%
\end{picture}%(b)
\qquad\raisebox{24mm}{$-\mathbf{3}\cdot$}\qquad
\begin{picture}(30,45)(-15,-25)%(c)
\put(-16,18){$(c)$}
\put(-15,0){\vector(1,0){30}}
\put(-15,0){\vector(3,4){15}}
\put(0,20){\vector(0,-1){29}}
\put(0,20){\vector(3,-4){15}}
\put(15,0){\vector(-3,-2){15}}
\put(0,-10){\vector(-3,2){14}}
\put(0,-10){\vector(0,-1){10}}
\put(15,0){\vector(0,-1){10}}
%%%
\put(-16,-1){\llap{{\footnotesize 3}}}
\put(16,-1){{{\footnotesize 4}}}
\put(-0.75,-13){\llap{{\footnotesize 2}}}
\put(-0.75,-21){\llap{{\footnotesize 0}}}
\put(14.25,-11){\llap{{\footnotesize 1}}}
\put(-0.5,20.5){{{\footnotesize 5}}}
%%%
\put(-9,-7.5){\llap{\small I}}
\put(9,-7.5){\small II}
\put(0.75,6){\small III}
\put(-8,0.75){\small IV}
\put(-10,7){\llap{\small V}}
\put(10,7){\small VI}
\put(0.5,-17){{\small $S_0$}}
\put(16,-7.5){\small $S_1$}
%%%
\end{picture}%(c)
\ \raisebox{24mm}{.}
}
\end{center}
\caption{The right\/-\/hand side $\cQ_{1:\frac{6}{2}}$ of the tetrahedral flow is encoded by the $\Lambda$-\/shaped and two $\Pi$-\/shaped orgraphs; the edge ordering in the wedges is~\eqref{EqTetra}, cf.\ Example~\ref{ExTetraLookAtEdges} below.}\label{FigTetra}
\end{figure}
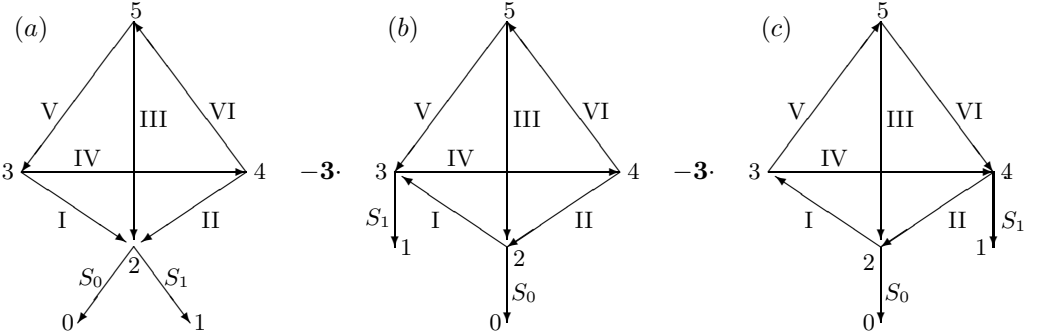

\begin{example}\label{ExTetraFlow}
In the tetrahedral flow 
\begin{equation}\label{EqTetra}
\cQ_{1:\frac{6}{2}}=
1 \cdot( \mathsf{0},\mathsf{1} ; \mathsf{2},\mathsf{4} ; \mathsf{2},\mathsf{5} ; \mathsf{2},\mathsf{3})
-3 \cdot( \mathsf{0},\mathsf{3} ; \mathsf{1},\mathsf{4} ; \mathsf{2},\mathsf{5} ; \mathsf{2},\mathsf{3}  + %sic! PLUS  
   \mathsf{0},\mathsf{3} ; \mathsf{4},\mathsf{5} ; \mathsf{1},\mathsf{2} ; \mathsf{2},\mathsf{ 4} )   %-3
,
\end{equation}
the $\Lambda$-\/shaped orgraph and the $\Pi$-\/shaped orgraph, skew\/-\/symmetrised over its sinks, occur in proportion $8:24=1:3$ (see~\cite{f16} and~\cite{Bourbaki2017}).%
\footnote{%One easily verifies 
We verify in App.~\ref{AppRotate} that the third graph is equal to minus the second term with edges $S_0\rightleftarrows S_1$ interchanged.}
\end{example}

\begin{rem}
Each encoding of a way to orient a graph~$\gamma_a$ to a Kontsevich orgraph on $2n$ edges yields a permutation of the ordered set 
$\sE(\gamma_a)\wedge S_0\wedge S_1 = + S_0\wedge S_1\wedge\sE(\gamma_a)$ of these %$2n$ 
edges. %Indeed
Namely, the permutation is encoded by the ordered (using any given labelling of internal vertices) list of ordered (w.r.t.\ Left~$<$ Right at a vertex) pairs of outgoing \textsl{edges}.
\end{rem}

\begin{convent}
We let the default, occurring with `$+$' sign, ordering of oriented edges in a Kontsevich orgraph under study be $S_0\prec S_1\prec \sE(\gamma_a) = S_0\prec S_1\prec I\prec II\prec\cdots$.
\end{convent}

\begin{example}\label{ExTetraLookAtEdges}
The tetrahedral graph flow is %Ref. to figure.
\begin{multline*}
\cQ_{1:\frac{6}{2}}= (+1)\cdot\left(
\begin{smallmatrix}
\mathsf{0 } & \mathsf{1 } \\
  S_0 & S_1  
\end{smallmatrix}\ %
\begin{smallmatrix}
\mathsf{2 } & \mathsf{4 } \\
  I & IV  
\end{smallmatrix}\ %
\begin{smallmatrix}
\mathsf{2 } & \mathsf{5 } \\
  II & VI  
\end{smallmatrix}\ %
\begin{smallmatrix}
\mathsf{2 } & \mathsf{3 } \\
  III & V  
\end{smallmatrix}
\right) 
\\
{}- 3\cdot\left[
\left(
\begin{smallmatrix}
\mathsf{0 } & \mathsf{3 } \\
  S_0 & I  
\end{smallmatrix}\ %
\begin{smallmatrix}
\mathsf{1 } & \mathsf{4 } \\
  S_1 & IV  
\end{smallmatrix}\ %
\begin{smallmatrix}
\mathsf{2 } & \mathsf{5 } \\
  II & VI  
\end{smallmatrix}\ %
\begin{smallmatrix}
\mathsf{2 } & \mathsf{3 } \\
  III & V  
\end{smallmatrix}%\ %
\right)
+
\left(
\begin{smallmatrix}
\mathsf{0 } & \mathsf{3 } \\
  S_0 & I  
\end{smallmatrix}\ %
\begin{smallmatrix}
\mathsf{4 } & \mathsf{5 } \\
  IV & V  
\end{smallmatrix}\ %
\begin{smallmatrix}
\mathsf{1 } & \mathsf{2 } \\
  S_1 & II  
\end{smallmatrix}\ %
\begin{smallmatrix}
\mathsf{2 } & \mathsf{4 } \\
  III & VI  
\end{smallmatrix}
\right)
\right].
\end{multline*}
The parity sign of edge permutation in the $\Lambda$-\/shaped graph is $(-)^{4}=(+)$, here we count the transpositions w.r.t.\ $S_0\prec S_1\prec I\prec\ldots\prec VI$; the permutations of edges in the second and third, $\Pi$-\/shaped graphs are parity\/-\/odd: $(-)^{5}=(-)=(-)^{7}$, respectively.
\end{example}

%\subsection{Multiplicity of an admissible oriented graph}
%%%
Using the set of all the $2^{\#\sE(\gamma_a)}$ orientations of edges in a single graph $(\gamma_a$,\ $\sE(\gamma_a))$ from a cocycle~$\gamma$, select all topologically isomorphic Kontsevich orgraphs.
  %on $n+2$ vertices (including two sinks) and $2n$~edges. 

\begin{theor}\label{ThSignsOnlyFromPermutEdges}
%As soon as the sign in front of one such orgraph~$\Gamma_0$ is postulated, 
Every orgraph~$\Gamma$, isomorphic to a Kontsevich orgraph~$\Gamma_0$,
acquires under~\eqref{EqOr} the sign
\begin{equation}\label{EqEpsilon}
\sign(\Gamma) = (-)^{\sigma_E} \cdot \sign(\Gamma_0),
   %\frac{\veps(\Gamma)}{\veps(\Gamma_0)}
\end{equation}
where the permutation~$\sigma_E\colon \Edge(\Gamma)\simeq\Edge(\Gamma_0)$ of oriented edges is induced by the orgraph isomorphism
$\sigma\colon \Gamma\simeq\Gamma_0$.
\end{theor}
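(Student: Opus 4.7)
The plan is to unfold formula~\eqref{EqOr} on a single graph $(\gamma_a,\sE(\gamma_a))$ in the cocycle and compare, side by side, the differential\/-\/polynomial contributions of two topologically isomorphic Kontsevich orgraphs $\Gamma$ and $\Gamma_0$ that both arise by orienting the edges of~$\gamma_a$. Under the hypothesis of Lemma~\ref{LemmaZeroWellDefIf}, the only source of signs in such a comparison is the reordering of parity\/-\/odd edge operators; the reordering of vertex contents is sign\/-\/free because the multivectors assigned to vertices are (all but possibly one) even\/-\/graded and hence pairwise commute.

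First I would expand each edge operator $\vec{\Delta}_{ij} = (i{\to}j) + (i{\leftarrow}j)$ inside the ordered product $\prod_{e_{ij}\in\sE(\gamma_a)}\vec{\Delta}_{ij}$, producing $2^{\#\sE(\gamma_a)}$ ordered products of directed derivations. After the $S_n$-\/symmetrisation in~\eqref{EqOr} and the insertion of copies of the Poisson bivector~$\cP$ at internal vertices, only those products in which every internal vertex is the arrowtail of exactly two oriented edges survive as Kontsevich orgraphs, in agreement with the selection rule stated just before Definition~\ref{DefMustache}.

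Next, given an isomorphism $\sigma\colon\Gamma\simeq\Gamma_0$ that respects the wedge orderings $L<R$ at internal vertices, I would write out the differential monomials assigned to $\Gamma$ and to~$\Gamma_0$ side by side. Both involve the same directed edge operators acting on the same bivectors at internal vertices, but placed at positions that differ precisely by the bijections $\sigma_V\colon \Vertices(\Gamma)\to\Vertices(\Gamma_0)$ and $\sigma_E\colon \Edge(\Gamma)\to\Edge(\Gamma_0)$ induced by $\sigma$. The $S_n$-\/sum in~\eqref{EqOr} absorbs $\sigma_V$ at no cost, since the even\/-\/graded vertex contents commute freely; whereas reordering the directed edge operators from the $\sE(\Gamma)$-\/order to the $\sE(\Gamma_0)$-\/order via $\sigma_E$ yields exactly the sign $(-)^{\sigma_E}$, because each directed summand of $\vec{\Delta}_{ij}$ inherits the parity\/-\/odd character of its parent.

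The main obstacle is the careful handling of the wedge $L<R$ orderings at the internal vertices. The Kontsevich convention that the orgraph with $L<R$ at a vertex~$v$ equals minus the same orgraph with $R<L$ at~$v$ meshes with the antisymmetry of $\cP = \tfrac{1}{2}P^{ij}(\bx)\,\xi_i\xi_j$, so that whenever $\sigma$ flips the wedge at some vertex an extra sign is incurred. The technical content is to verify that any such wedge\/-\/flip sign is already incorporated into both $\sign(\Gamma)$ and $\sign(\Gamma_0)$ under the adopted conventions, hence it cancels from the ratio in~\eqref{EqEpsilon}. Once this bookkeeping is settled, equality~\eqref{EqEpsilon} follows directly by reading off the sign of the edge permutation from the reshuffled product of anticommuting edge operators.
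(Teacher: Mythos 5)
Your argument is essentially the paper's own proof: the vertex permutation costs nothing because each internal vertex carries a parity\/-\/even pair $\xi_p\xi_q\cdot\tfrac{1}{2}P^{pq}(\bx)$, so the whole sign comes from reshuffling the parity\/-\/odd edge operators (equivalently, the odd letters~$\xi$ marking the edge tails), which yields exactly~$(-)^{\sigma_E}$. One correction to your closing paragraph: the Left~$<$~Right wedge orderings at internal vertices contribute no sign at all --- a wedge flip at a vertex is itself a transposition of two edges that is already counted inside~$\sigma_E$, not a separate factor that ``cancels from the ratio''; indeed Example~\ref{ExMustache8} shows that inserting an extra factor $(-)^{\#(L>R)}$ per orgraph would produce the wrong coefficient $4-4=0$ instead of the true $+8$ for the $\Lambda$-\/shaped tetrahedral orgraph.
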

%where the ratio of ordered sets of $2n$~oriented edges is determined by the orgraph isomorphism which acts on the edge set by permutation.

This is the rule for the count of orgraph multiplicities and signs.
   %contribute to the multiplicity of their isomorphism class 
For instance, the multiplicity $+8$ of the first Kontsevich orgraph in the tetrahedral flow (cf.~\cite{Ascona96,Bourbaki2017} and~\cite{f16}) is obtained in Example~\ref{ExMustache8} at the end of this section.
Examples illustrating how rule~\eqref{EqEpsilon} works in the various counts of signs will be given in section~\ref{SecRuleOfSignsIntrinsic} (see p.~\pageref{ExTetra} below, as well as Example~\ref{ExTetraLookAtEdges} earlier in this section).\footnote{The secret confined in Theorem~\ref{ThSignsOnlyFromPermutEdges} is that the calculation of edge permutation parities is in fact redundant for finding the signs in front of the Kontsevich orgraphs; this will be seen in the next section.}

\begin{proof}
Represent the $\alpha$th copy of Poisson bi\/-\/vector~$\cP_{(\alpha)}$ using $\xi_{p(\alpha)}\xi_{q(\alpha)}\cdot \tfrac{1}{2} P_{(\alpha)}^{p(\alpha)\,q(\alpha)}(\boldsymbol{x})$, here $1\leqslant\alpha\leqslant n$, and collect the 
  %ordered 
set of parity\/-\/even pairs $(\xi\xi)_{\alpha}$ to the left of the product of bi\/vector coefficients. The edge orienting operator~$\vbDelta$ acts from the left on the string $(\xi\xi)_1\cdot\ldots\cdot(\xi\xi)_n$; the topology of unoriented graph~$\gamma_a$ and the choice of $\vec{\dd}/\dd x^\nu_{(i)}\otimes\vec{\dd}/\dd\xi_\nu^{(j)}$ versus
$\vec{\dd}/\dd x^\nu_{(j)}\otimes\vec{\dd}/\dd\xi_\nu^{(i)}$ at each pair $(ij)$
specify the $n$th degree differential monomial in coefficients~$P_{(\alpha)}^{p(\alpha)\,q(\alpha)}$. (Note that the index %indices $i$,\ $j$ as well as 
$\nu$ for every edge $e_{ij}$ is a summation index.) 

Under the orgraph isomorphism~$\sigma$, which unshuffles the vertex pairs~$(\xi\xi)_\alpha$ by an even\/-\/parity permutation of the letters~$\xi$, the parity\/-\/odd edge operators~$\Delta_{ij}$ and the two new edges~$S_0$, $S_1$ are permuted by~$\sigma_E$.
This permutation of edges corresponds to a permutation of the parity\/-\/odd letters~$\xi$ that indicate the tails of those edges. 
%%%
Therefore, the two Kontsevich orgraph encodings differ by the sign factor~$(+)\cdot (-)^{\sigma_E}=(-)^{\sigma_E}$.
\end{proof}

\begin{cor}\label{CorSkewGuaranteed}
A skew\/-\/symmetry w.r.t.\ the sinks is guaranteed for bi\/-\/vector orgraphs. Indeed, for orgraphs with swapped new edges $S_0$ and~$S_1$ issued to the sinks $f$, $g$ from some vertex (or two distinct vertices) of~$\gamma_a$, the sign of transposition~$S_0\rightleftarrows S_1$ balances the two types of orgraphs: with $S_0$ to~$f$ and $S_1$ to~$g$ against orgraphs with the edge $S_1$ now issued to $g$ from the old source of~$S_0$, and the new edge $S_0$ issued to $f$ from the old source of the old edge~$S_1$.
\end{cor}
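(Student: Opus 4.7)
The plan is to derive the corollary as a direct consequence of Theorem~\ref{ThSignsOnlyFromPermutEdges} by pairing each bi\/-\/vector Kontsevich orgraph $\Gamma$ produced by $\Or(\gamma)$ with a companion orgraph $\Gamma'$ related to it by the transposition of the two sink edges. I would first fix $\Gamma$ with new edges $S_0\colon u\to\mathsf{0}$ and $S_1\colon v\to\mathsf{1}$, covering both the $\Lambda$-\/shaped case $u=v$ and the $\Pi$-\/shaped case $u\neq v$ of Definition~\ref{DefMustache}, and define $\Gamma'$ as the orgraph with the same orientation of the $2n-2$ edges internal to $\gamma_a$ but with the two sink edges issued from the swapped sources, $S_0\colon v\to\mathsf{0}$ and $S_1\colon u\to\mathsf{1}$.

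The key step is to exhibit a topological isomorphism $\sigma\colon\Gamma\to\Gamma'$ realised by the transposition of the two sinks $\mathsf{0}\rightleftarrows\mathsf{1}$ (accompanied, in the $\Pi$-\/shaped case, by the transposition $u\rightleftarrows v$ of the source vertices). Under $\sigma$ every edge internal to $\gamma_a$ is preserved, so the induced permutation $\sigma_E$ of oriented edges reduces to the single transposition $S_0\rightleftarrows S_1$ on the two new edges. This permutation is parity\/-\/odd, whence Theorem~\ref{ThSignsOnlyFromPermutEdges} immediately yields $\sign(\Gamma')=-\sign(\Gamma)$. Grouping the terms of $\Or(\gamma)(\cP,\ldots,\cP)$ into such companion pairs $\{\Gamma,\Gamma'\}$ then builds the skew\/-\/symmetry of the bi\/-\/vector output under the swap of sink contents $f\rightleftarrows g$, which is exactly the assertion of the corollary.

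I expect the main delicacy to lie in the $\Lambda$-\/shaped case, where $\Gamma$ and $\Gamma'$ coincide as abstract oriented graphs and differ only by the two opposite wedge orderings at the common source $u=v$. There one has to verify that the sign $(-)^{\sigma_E}=-1$ predicted by Theorem~\ref{ThSignsOnlyFromPermutEdges} is compatible with the intrinsic antisymmetry $\xi_i\xi_j=-\xi_j\xi_i$ of the bi\/-\/vector $\cP=\tfrac{1}{2}P^{ij}(\boldsymbol{x})\xi_i\xi_j$ inserted at that vertex, so that the two encodings enter the sum with the correct relative weight and are neither inadvertently identified nor doubled. Once this local compatibility is confirmed, the generic $\Pi$-\/shaped case follows by the same edge\/-\/permutation count with no further subtlety.
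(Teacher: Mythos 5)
Your proposal is correct and takes essentially the same route as the paper's own (one\/-\/line) justification: the companion orgraphs $\Gamma$ and $\Gamma'$ are related by an isomorphism that fixes the body and induces only the single, parity\/-\/odd transposition $S_0\rightleftarrows S_1$ in the ordered set $S_0\wedge S_1\wedge\sE(\gamma_a)$, so that Theorem~\ref{ThSignsOnlyFromPermutEdges} yields $\sign(\Gamma')=-\sign(\Gamma)$ and the pairing gives the skew\/-\/symmetry (with the $\Lambda$-\/shaped case indeed reducing to the antisymmetry of the pair $\xi_i\xi_j$ at the common source, as you note). One small slip: in the $\Pi$-\/shaped case the isomorphism must be the \emph{identity} on all internal vertices and swap only the sinks $\mathsf{0}\rightleftarrows\mathsf{1}$ --- additionally transposing the sources $u\rightleftarrows v$ would disturb the body edges and would carry $\Gamma$ back to itself rather than to $\Gamma'$, contradicting your own (correct) claim that every edge internal to $\gamma_a$ is preserved.
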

   % $S_0$ to~$g$ and $S_1$ to~$f$.

\begin{lemma}
The sign %ratio 
of edge permutation %orderings 
in~\eqref{EqEpsilon} trivializes the contribution from any \emph{zero} unoriented graph~$\cZ$ to the sum over all possible ways to orient its edges.
\end{lemma}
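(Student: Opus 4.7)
The plan is to replicate, at the level of oriented Kontsevich graphs, the cancellation mechanism already employed in the proof of Lemma~\ref{LemmaZeroWellDefIf}, now invoking the sign rule~\eqref{EqEpsilon} of Theorem~\ref{ThSignsOnlyFromPermutEdges} in place of the $\BBZ/2\BBZ$-\/graded commutation of the letters~$\xi$. I would begin by unpacking the defining property of a zero graph: there exists an automorphism $\sigma\in\Aut(\cZ)$ whose induced action $\sigma_E$ on the wedge\/-\/ordered edge set $\sE(\cZ)$ is a parity\/-\/odd permutation. This $\sigma$ acts on the set of $2^{\#\sE(\cZ)}$ ways to orient the edges of $\cZ$ by sending an orientation $\omega$ to $\sigma(\omega)$, and the two oriented graphs $(\cZ,\omega)$ and $(\cZ,\sigma(\omega))$ are tautologically isomorphic as orgraphs through the map $\sigma$ itself.

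Next I would partition the orientations into orbits under the cyclic group $\langle\sigma\rangle$ and treat the free and fixed orbits separately. For a free pair, that is when $\sigma(\omega)\neq\omega$, Theorem~\ref{ThSignsOnlyFromPermutEdges} applied to the orgraph isomorphism $\sigma\colon (\cZ,\omega)\simeq(\cZ,\sigma(\omega))$ shows that the two signs attached to these orgraphs in the expansion of $\Or(\cZ)$ differ exactly by the parity factor $(-)^{\sigma_E}=-1$, so the two terms cancel. For a fixed orientation $\omega=\sigma(\omega)$, the automorphism $\sigma$ descends to an automorphism of $(\cZ,\omega)$ which still permutes its arrow\/-\/edges with odd parity; by the analogue of the definition of a zero graph, this marks $(\cZ,\omega)$ as a zero Kontsevich orgraph, whose contribution already vanishes at the supermath level by Lemma~\ref{LemmaZeroWellDefIf}. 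Combining, $\Or(\cZ)$ reduces to a combination of zero Kontsevich orgraphs only.

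A brief consistency remark would then address orientations that fail to produce a valid Kontsevich orgraph, i.e.\ those for which some internal vertex is not the arrowtail of exactly two edges: such terms vanish automatically upon substitution of the bi\/-\/vector $\cP=\tfrac12 P^{ij}(\bx)\,\xi_i\xi_j$, because a single wedge $\xi_i\xi_j$ cannot support three or more applications of $\vec{\dd}/\dd\xi_\bullet$ at the same vertex, so the pairing above over $\langle\sigma\rangle$\/-\/orbits remains valid on the restricted set of Kontsevich orientations.

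The step I expect to require most care is tracking how the vertex relabelling $\sigma_V$ interacts with the wedge ordering $L<R$ chosen at each internal vertex of the resulting orgraph, since swapping the two outgoing edges at a wedge produces an extra sign. The task is to check that these local per\/-\/vertex signs are fully absorbed into the global parity $(-)^{\sigma_E}$ attached to the ordered set of oriented edges, so that the sign asserted in~\eqref{EqEpsilon} really captures the entire sign change induced by $\sigma$ and nothing more, thereby completing the pairwise cancellation.
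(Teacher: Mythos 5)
Your proof is correct and follows essentially the same route as the paper: both arguments exploit the parity\/-\/odd edge permutation~$\sigma_E$ induced by an automorphism of the zero graph~$\cZ$ together with sign rule~\eqref{EqEpsilon} to cancel the contributions of isomorphic Kontsevich orgraphs in pairs. The only organisational difference is that you pair the $2^{\#\sE(\cZ)}$ orientations along the orbits of one chosen odd automorphism~$\sigma$ and dispose of the $\sigma$-\/fixed orientations as zero orgraphs, whereas the paper fixes a topological profile~$\Gamma_0$ and observes that the induced subgroup $H\unlhd\mathbb{S}_{2n-2}$ of edge permutations has equally many parity\/-\/even and parity\/-\/odd elements; your explicit treatment of the fixed orientations makes precise what the paper's phrase ``cancel in disjoint pairs'' leaves implicit.
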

%whenever~$\gamma_a$ is a zero unoriented graph, then by taking the sum over all ways to orient it to a particular orgraph, one obtains the cancellation $(0\in\BBR)\cdot(\text{any orgraphs}) + (\text{coeffs}\in\BBR)\cdot(\text{zero orgraphs})$

\begin{proof}
Run over the entire sum of $2^{\#\sE(\cZ)}$ ways to orient edges in a zero graph~$\cZ$, and select all admissible not identically zero differential monomials similar to a given one, \textsl{i.e.}\ pick all Kontsevich orgraphs~$\Gamma$ isomorphic to a given one (denote it by~$\Gamma_0$). Each isomorphism $\Gamma\simeq\Gamma_0$ represents an %its own
element of the automorphism group~$\Aut(\cZ)$.
By the definition of zero unoriented graph, the subgroup $H\unlhd\mathbb{S}_{2n-2}$ of unoriented edge permutations --\,under the action of the group~$\Aut(\cZ)$\,-- contains at least one parity\/-\/odd element~$\sigma_E$. Therefore, the numbers of parity\/-\/even and parity\/-\/odd elements in~$H$ coincide.
   %(Indeed, the parity\/-\/reversing left multiplication by~$\sigma_E$ is a bijection of the set~$H$.)
This implies that these $\#\Aut(\cZ)$ elements cancel in disjoint pairs, so that the contribution of each topological profile in $\Or(\cZ)(\cP$,\ $\ldots$,\ $\cP)$ comes with zero coefficient.
\end{proof}

\begin{rem}%Let us emphasize that 
The wedge orderings Left~$<$ Right of outgoing edges at internal vertices of the Kontsevich orgraphs under study
%are by default inherited from the precedence $S_1\prec S_2\prec I\prec II\prec\ldots$ but these orderings of edge pairs 
play no role in %Neither the rule for 
the count of multiplicities %nor the one %rules 
and %for %the count of 
signs\,! %are principally different: the former ignores 
%refer to the edge orderings Left~$<$ Right at internal vertices of the orgraphs. % whereas Rule~\ref{RuleGammaGamma} for the signs ignores the multiplicities (see below).
\end{rem}

\begin{example}\label{ExMustache8}
Indeed, %one verifies that, 
were %this the case, 
the Left~$<$ Right oriented edge orderings contributing with a `$-$' sign factor per vertex whenever Right~$\prec$ Left in the edge ordering $S_0\prec S_1 \prec I\prec II\prec\ldots$, 
the $\Lambda$-\/shaped graph of differential orders $(3,1,1,1)$ in the tetrahedral flow $\Or(\boldsymbol{\gamma}_3)(\cP)$ %in~\cite{Ascona96} 
would accumulate the wrong coefficient $4-4=0$ instead of the true value $4+4=8$ in the balance $8:24$ for the linear combination $\cQ_{1:\frac{6}{2}}(\cP)$ of the Kontsevich graphs (see Fig.~\ref{FigLambdaTetraAtoH} 
\begin{figure}[htb]
\begin{center}
{\unitlength=1mm
\begin{picture}(30,45)(-15,-25)%{a}
\put(-14,18){\llap{$\{a\}$}}
%%%
\put(-15,0){\vector(1,0){30}}
\put(0,20){\vector(-3,-4){15}}
\put(15,0){\vector(-3,4){15}}
\put(0,20){\vector(0,-1){29}}
\put(-15,0){\vector(3,-2){14}}
\put(15,0){\vector(-3,-2){14}}
\put(0,-10){\vector(3,-4){7.5}}
\put(0,-10){\vector(-3,-4){7.5}}
%%%
\put(-16,-1){\llap{{\footnotesize 2}}}
\put(16,-1){{{\footnotesize 3}}}
\put(-0.75,-13.5){{{\footnotesize 1}}}
\put(-0.5,20.5){{{\footnotesize 4}}}
\put(-8,-21){\llap{{\footnotesize $f$}}}
\put(8,-21){{{\footnotesize $g$}}}
%%%
\put(-9,-7.5){\llap{\small I}}
\put(9,-7.5){\small II}
\put(0.75,6){\small III}
\put(-8,0.75){\small IV}
\put(-10,7){\llap{\small V}}
\put(10,7){\small VI}
\put(-4,-15){\llap{\small $S_0$}}
\put(4,-15){\small $S_1$}
%%%
\end{picture}%(a)
%%%
\qquad\qquad
\begin{picture}(30,45)(-15,-25)%{b}
\put(-14,18){\llap{$\{b\}$}}
%%%
\put(15,0){\vector(-1,0){30}}
\put(-15,0){\vector(3,4){15}}
\put(0,20){\vector(3,-4){15}}
\put(0,20){\vector(0,-1){29}}
\put(-15,0){\vector(3,-2){14}}
\put(15,0){\vector(-3,-2){14}}
\put(0,-10){\vector(3,-4){7.5}}
\put(0,-10){\vector(-3,-4){7.5}}
%%%
%\put(-16,-1){\llap{{\footnotesize 2}}}
%\put(16,-1){{{\footnotesize 3}}}
%\put(-0.75,-13.5){{{\footnotesize 1}}}
%\put(-0.5,20.5){{{\footnotesize 4}}}
\put(-8,-21){\llap{{\footnotesize $f$}}}
\put(8,-21){{{\footnotesize $g$}}}
%%%
\put(-9,-7.5){\llap{\small I}}
\put(9,-7.5){\small II}
\put(0.75,6){\small III}
\put(-8,0.75){\small IV}
\put(-10,7){\llap{\small V}}
\put(10,7){\small VI}
\put(-4,-15){\llap{\small $S_0$}}
\put(4,-15){\small $S_1$}
%%%
\end{picture}%(b)
%%%
\qquad\qquad
\begin{picture}(30,45)(-15,-25)%{c}
\put(-14,18){\llap{$\{c\}$}}
%%%
\put(-15,0){\vector(3,-2){14}}
\put(0,-10){\vector(3,2){14}}
\put(0,-10){\vector(0,1){29}}
\put(-15,0){\vector(1,0){30}}
\put(0,20){\vector(-3,-4){15}}
\put(0,20){\vector(3,-4){15}}
\put(15,0){\vector(-1,-4){2.5}}
\put(15,0){\vector(1,-4){2.5}}
%%%
%\put(-16,-1){\llap{{\footnotesize 2}}}
%\put(16,-1){{{\footnotesize 3}}}
%\put(-0.75,-13.5){{{\footnotesize 1}}}
%\put(-0.5,20.5){{{\footnotesize 4}}}
\put(12,-12){\llap{{\footnotesize $f$}}}
\put(17.5,-12){{{\footnotesize $g$}}}
%%%
\put(-9,-7.5){\llap{\small I}}
\put(9,-7.5){\small II}
\put(0.75,6){\small III}
\put(-8,0.75){\small IV}
\put(-10,7){\llap{\small V}}
\put(10,7){\small VI}
%\put(-4,-15){\llap{\small $S_0$}}
%\put(4,-15){\small $S_1$}
%%%
\end{picture}%(c)
%%%
\\[5pt]
\begin{picture}(30,45)(-15,-25)%{d}
\put(-14,18){\llap{$\{d\}$}}
%%%
\put(0,-10){\vector(-3,2){14}}
\put(0,-10){\vector(3,2){14}}
\put(0,20){\vector(0,-1){29}}
\put(-15,0){\vector(1,0){30}}
\put(-15,0){\vector(3,4){15}}
\put(0,20){\vector(3,-4){15}}
\put(15,0){\vector(-1,-4){2.5}}
\put(15,0){\vector(1,-4){2.5}}
%%%
%\put(-16,-1){\llap{{\footnotesize 2}}}
%\put(16,-1){{{\footnotesize 3}}}
%\put(-0.75,-13.5){{{\footnotesize 1}}}
%\put(-0.5,20.5){{{\footnotesize 4}}}
\put(12,-12){\llap{{\footnotesize $f$}}}
\put(17.5,-12){{{\footnotesize $g$}}}
%%%
\put(-9,-7.5){\llap{\small I}}
\put(9,-7.5){\small II}
\put(0.75,6){\small III}
\put(-8,0.75){\small IV}
\put(-10,7){\llap{\small V}}
\put(10,7){\small VI}
%\put(-4,-15){\llap{\small $S_0$}}
%\put(4,-15){\small $S_1$}
%%%
\end{picture}%(d)
\qquad\qquad
\begin{picture}(30,45)(-15,-25)%{e}
\put(-14,18){\llap{$\{e\}$}}
%%%
\put(0,-10){\vector(-3,2){14}}
\put(0,-10){\vector(3,2){14}}
\put(0,20){\vector(0,-1){29}}
\put(15,0){\vector(-1,0){30}}
\put(0,20){\vector(-3,-4){15}}
\put(15,0){\vector(-3,4){15}}
\put(-15,0){\vector(-1,-4){2.5}}
\put(-15,0){\vector(1,-4){2.5}}
%%%
%\put(-16,-1){\llap{{\footnotesize 2}}}
%\put(16,-1){{{\footnotesize 3}}}
%\put(-0.75,-13.5){{{\footnotesize 1}}}
%\put(-0.5,20.5){{{\footnotesize 4}}}
\put(-17.75,-12){\llap{{\footnotesize $f$}}}
\put(-12.25,-12){{{\footnotesize $g$}}}
%%%
\put(-9,-7.5){\llap{\small I}}
\put(9,-7.5){\small II}
\put(0.75,6){\small III}
\put(-8,0.75){\small IV}
\put(-10,7){\llap{\small V}}
\put(10,7){\small VI}
%\put(-4,-15){\llap{\small $S_0$}}
%\put(4,-15){\small $S_1$}
%%%
\end{picture}%(e)
\qquad\qquad
\begin{picture}(30,45)(-15,-25)%{f}
\put(-14,18){\llap{$\{f\}$}}
%%%
\put(0,-10){\vector(-3,2){14}}
\put(15,0){\vector(-3,-2){14}}
\put(0,-10){\vector(0,1){29}}
\put(15,0){\vector(-1,0){30}}
\put(0,20){\vector(-3,-4){15}}
\put(0,20){\vector(3,-4){15}}
\put(-15,0){\vector(-1,-4){2.5}}
\put(-15,0){\vector(1,-4){2.5}}
%%%
%\put(-16,-1){\llap{{\footnotesize 2}}}
%\put(16,-1){{{\footnotesize 3}}}
%\put(-0.75,-13.5){{{\footnotesize 1}}}
%\put(-0.5,20.5){{{\footnotesize 4}}}
\put(-17.75,-12){\llap{{\footnotesize $f$}}}
\put(-12.25,-12){{{\footnotesize $g$}}}
%%%
\put(-9,-7.5){\llap{\small I}}
\put(9,-7.5){\small II}
\put(0.75,6){\small III}
\put(-8,0.75){\small IV}
\put(-10,7){\llap{\small V}}
\put(10,7){\small VI}
%\put(-4,-15){\llap{\small $S_0$}}
%\put(4,-15){\small $S_1$}
%%%
\end{picture}%(f)
\\[15pt]
\begin{picture}(30,30)(-15,-10)%{g}
\put(-14,18){\llap{$\{g\}$}}
%%%
\put(0,-10){\vector(-3,2){14}}
\put(15,0){\vector(-3,-2){14}}
\put(0,-10){\vector(0,1){29}}
\put(-15,0){\vector(1,0){30}}
\put(-15,0){\vector(3,4){15}}
\put(15,0){\vector(-3,4){15}}
\put(0,20){\vector(-1,4){2.5}}
\put(0,20){\vector(1,4){2.5}}
%%%
%\put(-16,-1){\llap{{\footnotesize 2}}}
%\put(16,-1){{{\footnotesize 3}}}
%\put(-0.75,-13.5){{{\footnotesize 1}}}
%\put(-0.5,20.5){{{\footnotesize 4}}}
\put(-3,28){\llap{{\footnotesize $f$}}}
\put(2.75,28){{{\footnotesize $g$}}}
%%%
\put(-9,-7.5){\llap{\small I}}
\put(9,-7.5){\small II}
\put(0.75,6){\small III}
\put(-8,0.75){\small IV}
\put(-10,7){\llap{\small V}}
\put(10,7){\small VI}
%\put(-4,-15){\llap{\small $S_0$}}
%\put(4,-15){\small $S_1$}
%%%
\end{picture}%(g)
\qquad\qquad
\begin{picture}(30,30)(-15,-10)%{h}
\put(-14,18){\llap{$\{h\}$}}
%%%
\put(-15,0){\vector(3,-2){14}}
\put(0,-10){\vector(3,2){14}}
\put(0,-10){\vector(0,1){29}}
\put(15,0){\vector(-1,0){30}}
\put(-15,0){\vector(3,4){15}}
\put(15,0){\vector(-3,4){15}}
\put(0,20){\vector(-1,4){2.5}}
\put(0,20){\vector(1,4){2.5}}
%%%
%\put(-16,-1){\llap{{\footnotesize 2}}}
%\put(16,-1){{{\footnotesize 3}}}
%\put(-0.75,-13.5){{{\footnotesize 1}}}
%\put(-0.5,20.5){{{\footnotesize 4}}}
\put(-3,28){\llap{{\footnotesize $f$}}}
\put(2.75,28){{{\footnotesize $g$}}}
%%%
\put(-9,-7.5){\llap{\small I}}
\put(9,-7.5){\small II}
\put(0.75,6){\small III}
\put(-8,0.75){\small IV}
\put(-10,7){\llap{\small V}}
\put(10,7){\small VI}
%\put(-4,-15){\llap{\small $S_0$}}
%\put(4,-15){\small $S_1$}
%%%
\end{picture}%(h)
}
\end{center}
%%%
\caption{The eight $\Lambda$-\/shaped ways to orient the tetrahedron: $\{a\}\div\{h\}$ yield $+8$.}\label{FigLambdaTetraAtoH}
\end{figure}
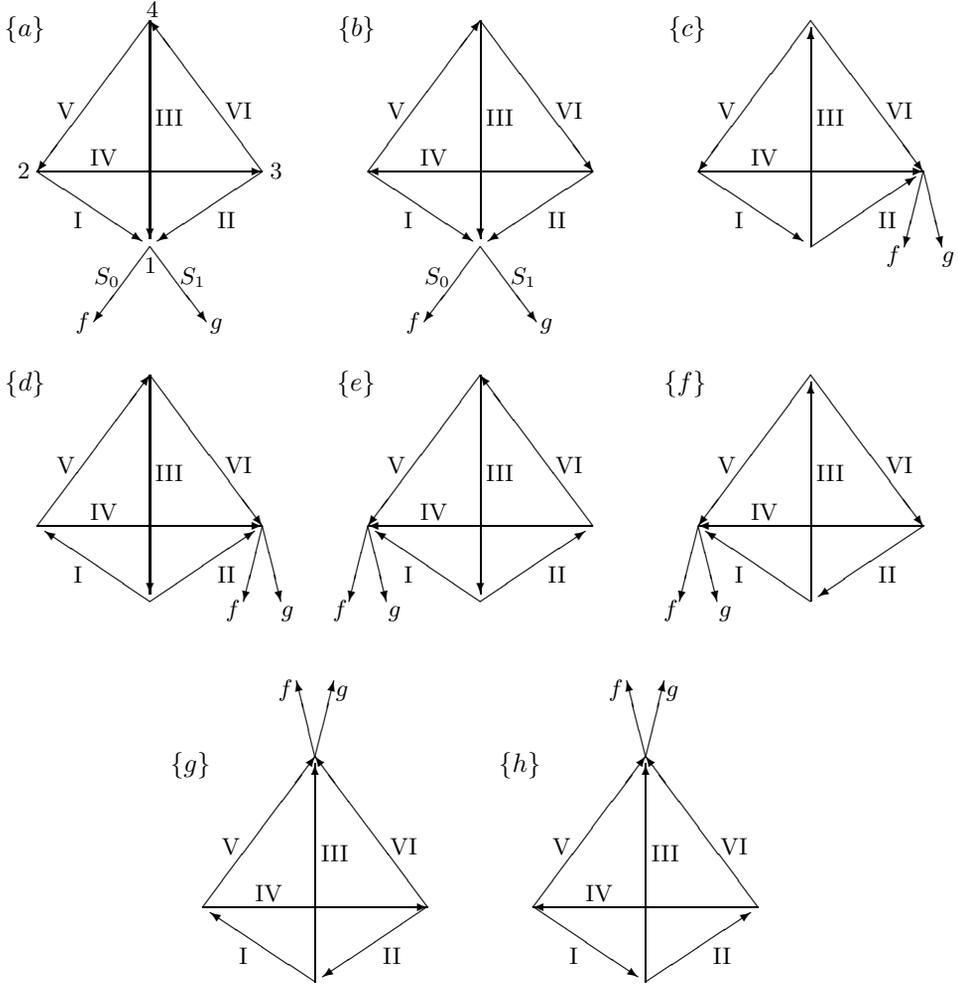
and Table~\ref{TabTetraMustacheEight}). %
%%%
\begin{table}[htb]
\caption{Permutations of six edges $I$, $\ldots$, $VI$ in the $\Lambda$-\/shaped orientations of the tetrahedron%
  %(see Definition~\ref{DefMustache} on p.~\pageref{DefMustache})
.}\label{TabTetraMustacheEight}
\begin{center}
\begin{tabular}{rccccccccc}
\hline
  & & & & & & & $(-)^{\sigma_E}$ & $\#(L>R)$ & $(-)^{\#(L>R)}$  \\[1pt]
\hline
$\{a\}$ & I & II & III & IV & V & VI & $+$ & 0 & $+$ \\%[1pt]
%\hline
$\{b\}$ & I & III & II & V & IV & VI & $+$ & 0 & $+$ \\
$\{c\}$ & II & VI & IV & III & I & V & $+$ & 2 & $+$ \\
$\{d\}$ & II & IV & VI & I & III & V & $+$ & 2 & $+$ \\
$\{e\}$ & IV & I & V & II & VI & III & $+$ & 1 & $-$ \\
$\{f\}$ & I & V & IV & III & II & VI & $+$ & 1 & $-$ \\
$\{g\}$ & V & VI & III & IV & I & II & $+$ & 3 & $-$ \\
$\{h\}$ & V & III & VI & I & IV & II & $+$ & 3 & $-$ \\[1pt]
\hline
\end{tabular}
\end{center}
%\centerline{\rule{1in}{0.7pt}}
\end{table}
For instance, $I^a\simeq V^h$ and $IV^a\simeq I^h$ so that in the orgraph $\{a\}$, $I^a<IV^a$, but under the orgraph isomorphism $\{a\}\simeq\{h\}$, one obtains the inversion $V^h<I^h$ at a vertex.\footnote{The full list of inversions is this: $\{c\}$ $V>VI$, $I>IV$; $\{d\}$ $I>II$, $III>VI$; $\{e\}$ $III>V$; $\{f\}$ $II>IV$; $\{g\}$ $IV>V$, $II>VI$, $I>III$; $\{h\}$ $I>V$, $II>III$, $IV>VI$.}
\end{example}

\section{The rule of signs in terms of Kontsevich orgraphs}\label{SecRuleOfSignsIntrinsic}
\noindent%
In this section we derive two rules which allow the matching of signs in front of Kontsevich's orgraphs by simply looking at these graphs (or their encodings), so that no calculation of parities for permutations of \emph{all} the edges is needed. The first rule is specific to $\Pi$-\/shaped orgraphs. The second rule describes the sign factors which are gained %acquired 
in the course of transitions $\Pi\rightleftarrows\Pi$, $\Lambda\rightleftarrows\Pi$, and $\Lambda\rightleftarrows\Lambda$ between the orgraphs of respective shapes, as long as they are taken from the set of all admissible ways to orient a given graph~$\gamma_a$ in a cocycle $\gamma=\sum_a c_a\cdot\gamma_a$. The work of both rules is illustrated using the tetrahedral and pentagon\/-\/wheel cocycle flows $\Or(\boldsymbol{\gamma}_3)(\cP)$ and~$\Or(\boldsymbol{\gamma}_5)(\cP)$ from~\cite{f16} and~\cite{sqs17}, respectively.

\begin{Rule}\label{RuleReverseSinksOrder}
A $\Pi$-\/shaped orgraph with ordered edge pairs $(S_0,A)(S_1,B)\cdots$ issued from two distinct vertices acquires under~\eqref{EqOr} the extra sign factor~$(-)$, compared with a graph with the ordered edge pairs $(S_0,B)(S_1,A)\cdots$, if $A\prec B$ in the edge ordering~$\sE(\gamma_a)$ of a graph to orient.
\end{Rule}

\begin{proof}
Indeed, $S_0\wedge S_1\wedge A\wedge B = -S_0\wedge A\wedge S_1\wedge B = (-)^2 S_0\wedge B\wedge S_1\wedge A = + S_1\wedge A\wedge S_0\wedge B$.
\end{proof} 

\begin{example}
Both $\Pi$-\/shaped graphs in the tetrahedral flow (see its encoding in Example~\ref{ExTetraLookAtEdges} in section~\ref{SecMultSignRules}) do acquire a sign factor by Rule~\ref{RuleReverseSinksOrder}.
\end{example}

\begin{define}
The \textsl{body} of a Kontsevich orgraph which is obtained by orienting~$\gamma_a$ in a cocycle~$\gamma$ is the set of oriented 
edges inherited from~$\gamma_a$, \textsl{i.e.}\ excluding the new edges~$S_i$ to the sinks.
\end{define}

\begin{Rule}\label{RuleRevers}
Let $\Gamma_1$ and~$\Gamma_2$ be two topologically nonisomorphic orgraphs which are obtained by orienting the same graph~$\gamma_a$ in a cocycle~$\gamma$.\footnote{%
For instance, such obviously are all the %two 
terms %--\,one skew by itself and the other skew\/-\/symmetrised by~\eqref{EqEpsilon}\,-- which occur with multiplicities~$8:24$ 
in the Kontsevich flow $\Or(\boldsymbol{\gamma}_3)(\cP)$ where the tetrahedron $\boldsymbol{\gamma}_3\in\ker\Id$ is oriented,
   %tetrahedral $%\cdot{\cP}=
   %\cQ_{1:\frac{6}{2}}(\cP)$, 
or the orgraphs which one obtains by orienting the pentagon wheel and the prism %other 
graph in the Kontsevich\/--\/Willwacher cocycle~$\boldsymbol{\gamma}_5$, cf.~\cite{JNMP2017,sqs17}.%
}
%%%
%from~\cite{Ascona96,Bourbaki2017} and~\cite{f16}: both graphs are obtained by orienting the tetrahedron~$\boldsymbol{\gamma}_3\in\ker\Id$.
%%%
%We consider three cases\textup{:}
\begin{description}
\item[$\Pi\rightleftarrows\Pi$]
If both the orgraphs are $\Pi$-\/shaped, then
the sign in front of (the multiplicity of) the orgraph~$\Gamma_2$ is determined from such sign given by~\eqref{EqOr} for~$\Gamma_1$ by now using the formula
\begin{equation}\label{EqReverseInBody}
\sign(\Gamma_2)=(-)^{\#\left\{\text{\parbox{5.2cm}{\ reverses of arrows in the body as $\Gamma_1 \to \Gamma_2$}}\right\}
}\cdot\sign(\Gamma_1).
\end{equation}
%%%
\item[$\Lambda\rightleftarrows\Pi$]
Transitions $\Lambda\rightleftarrows\Pi$ yield the product of sign factors $(-)\times{}$formula~\eqref{EqReverseInBody}, \textsl{i.e.}\ the extra~$(-)$ is universal, distinguishing between the shapes.
%%%
\item[$\Lambda\rightleftarrows\Lambda$]
Same\/-\/shape transitions $\Lambda\rightleftarrows\Lambda$ acquire only the sign factor~\eqref{EqReverseInBody}.
\end{description}
In other words, the transition $\Lambda\rightleftarrows\Pi$ signals the sign factor $(-)$, and the number of body arrow reversals contributes in all cases.
\end{Rule}

%%% The count of arrow reversals in terms of K. orgraph encodings by target vertices.
%%% NB: Becomes harder if graph encodings are normalised, i.e. vertices _are_ permuted.

\begin{proof}
\textbf{Case~$\Pi\rightleftarrows\Pi$.}
For the sake of clarity, assume at once that the edge operators~$\vec{\Delta}_{ij}$ corresponding to the edges whose orientation is \emph{not} reversed have already acted on the argument of two operators~$\vbDelta$ corresponding to the two graphs,~$\Gamma_1$ and~$\Gamma_2$.
There remain $\kappa$~edge operators acting on the product of $\kappa+2$ comultiples $\xi\cdots\xi$ times an even factor formed by the coefficients~$P^{pq}_{(\alpha)}(\bx)$ of bi\/-\/vector copies.

Consider the righmost operator~$\vec{\Delta}_{ij}$ from what remains; it is the sum $\vec{\dd}/\dd\xi^{\text{old}}_{\text{tail}} \otimes
\vec{\dd}/\dd x^{\text{old}}_{\text{head}}$ and
$\vec{\dd}/\dd\xi^{\text{old}}_{\text{head}} \otimes
\vec{\dd}/\dd x^{\text{old}}_{\text{tail}} =
\vec{\dd}/\dd\xi^{\text{new}}_{\text{tail}} \otimes
\vec{\dd}/\dd x^{\text{new}}_{\text{head}}$.
Because the derivatives $\vec{\dd}/\dd\bx$ have even parity, we focus on the choice of superderivation to orient the edge (resp., fix and then reverse its orientation). In the ordered string~$\xi\cdots\xi$, let us bring next to each other the symbols~$\xi_i$ and~$\xi_j$ from the copies~$\cP_{(i)}$ and~$\cP_{(j)}$ contained in the~$i$th and~$j$th vertices. It is obvious that the action by~$\vec{\dd}/\dd\xi$ on one such comultiple instead of the other creates the sign factor~$(-)$. Doing this $\kappa$~times counts the number of arrow reversions in the body of~$\Pi$-\/shaped orgraph, whence~$(-)^\kappa$.

\smallskip
\noindent\textbf{Case~$\Lambda\rightleftarrows\Pi$.}
To avoid an agglomeration of symbols, we omit the letters~$\xi$ and display their subscripts, thus indicating either which body edge it is (say $A$ or $B$, $A\prec B$) or where it goes to ($S_0\mathrel{{:}{=}}F$ to the argument~$f$ in the sink~$\mathsf{0}$ and $S_1\mathrel{{:}{=}}G$ to the argument~$g$ in the sink~$\mathsf{1}$). Remember that the edge letters $A$, $B$, $F$, and~$G$ are parity\/-\/odd by construction.

Without loss of generality, let us assume that in the string of $2n$~comultiples the four rightmost are,
\[
\begin{aligned}
\text{for the $\Lambda$-\/shaped orgraph:\qquad} &  A\:B\ F\:G,\\
\text{for the $\Pi$-\/shaped orgraph:\qquad} & A\:F\ B\:G.\\
\end{aligned}
\]
We see that $(A\:B)\:(F\:G) = - (A\:F)\:(B\:G)$, whence we obtain the sought\/-\/for universal sign factor~$(-)$ for any transitions between the different shapes~$\Lambda\rightleftarrows\Pi$ (see Examples~\ref{ExTetra} and~\ref{ExPenta} in what follows). Now, the count of body edge reversals goes exactly as before.

\smallskip
\noindent\textbf{Case~$\Lambda\rightleftarrows\Lambda$.}
There remains almost nothing to prove: in the above notation, we have that $A\:B\ F\:G = F\:G\ A\:B$, hence no extra sign factor is produced when the wedge of two edges directed to sinks is transported from one internal vertex to another.\footnote{%
This will presently be illustrated in Example~\ref{ExPentaMustache2} by using topologically nonisomorphic $\Lambda$-\/shaped orgraphs in the set of admissible orientations of the pentagon wheel in the Kontsevich\/--\/Willwacher cocycle~$\boldsymbol{\gamma}_5$.}
%%%
\end{proof}

\begin{example}\label{ExTetra}
Consider the r.-h.s.\ %right\/-\/hand side 
$\cQ_{1:\frac{6}{2}}=\Or(\boldsymbol{\gamma}_3)(\cP)$ of the Kontsevich tetrahedral flow,
\begin{multline*}
\cQ_{1:\frac{6}{2}}= (+1)\cdot\underbrace{ \left(
\begin{smallmatrix}
\mathsf{0 } & \mathsf{1 } \\
  S_0 & S_1  
\end{smallmatrix}\ %
\begin{smallmatrix}
\mathsf{2 } & \mathsf{4 } \\
  I & IV  
\end{smallmatrix}\ %
\begin{smallmatrix}
\mathsf{2 } & \mathsf{5 } \\
  II & VI  
\end{smallmatrix}\ %
\begin{smallmatrix}
\mathsf{2 } & \mathsf{3 } \\
  III & V  
\end{smallmatrix}
\right) }_{\text{$\Lambda$-\/shaped}}
%%%
\\
{}- 3\cdot\Bigl[
\underbrace{ \left(
\begin{smallmatrix}
\mathsf{0 } & \mathsf{3 } \\
  S_0 & I  
\end{smallmatrix}\ %
\begin{smallmatrix}
\mathsf{1 } & \mathsf{4 } \\
  S_1 & IV  
\end{smallmatrix}\ %
\begin{smallmatrix}
\mathsf{2 } & \mathsf{5 } \\
  II & VI  
\end{smallmatrix}\ %
\begin{smallmatrix}
\mathsf{2 } & \mathsf{3 } \\
  III & V  
\end{smallmatrix}%\ %
\right) }_{\text{minuend}}
%%%
+
\underbrace{ \left(
\begin{smallmatrix}
\mathsf{0 } & \mathsf{3 } \\
  S_0 & I  
\end{smallmatrix}\ %
\begin{smallmatrix}
\mathsf{4 } & \mathsf{5 } \\
  IV & V  
\end{smallmatrix}\ %
\begin{smallmatrix}
\mathsf{1 } & \mathsf{2 } \\
  S_1 & II  
\end{smallmatrix}\ %
\begin{smallmatrix}
\mathsf{2 } & \mathsf{4 } \\
  III & VI  
\end{smallmatrix}
\right) }_{\text{subtrahend}}
\Bigr].
\end{multline*}
%There are no inversions, Left~$\succ$ Right, at any of the internal vertices. 
Using Rules~\ref{RuleReverseSinksOrder} and~\ref{RuleRevers}, let us show why the sign which relates the $\Lambda$-\/shaped orgraph to the skew\/-\/symmetrisation of $\Pi$-\/shaped orgraph is equal to~$(-)$; the count of multiplicities, $8:24=1:3$, is standard.\footnote{The admissible $\Lambda$-\/shaped orientations of the tetrahedron are obtained by attaching the wedge~$S_0S_1$ to one of the four vertices and orienting the opposite face using one of two admissible ways, so that~$4\cdot2=8$. The $\Pi$-\/shaped Kontsevich graphs are obtained by selecting an edge from six of them, directing it in one of the two ways, and orienting the opposite edge also in one of two ways, whence $6\cdot2\cdot2=24$.}
%%%
\begin{itemize}
\item In the minuend, which is a $\Pi$-\/shaped orgraph, Rule~\ref{RuleReverseSinksOrder} 
contributes --\,for the edge pairs $(S_0\:I)\:(S_1\:IV)\cdots
   % = - (S_1\:IV)\:(S_0\:I)\cdots %
$\,-- with the first factor~$(-)$.
\item In the course of transition $\Lambda\rightleftarrows\Pi$ to the minuend, one arrow in the body of orgraph is reversed (namely, it is the edge~$I$ bridging the %outgoing 
edges~$S_0$ and~$S_1$ issued from the vertices~$\mathsf{2}$ and~$\mathsf{3}$), whence another minus sign, $(-)=(-)^1$.
\item The transition $\Lambda\rightleftarrows\Pi$ itself contributes with a universal sign~$(-)$, see Rule~\ref{RuleRevers} again.
\end{itemize}
In total, we accumulate the sign factor~$(-)\cdot(-)\cdot(-)=(-)$, which indeed is the sign that relates the skew\/-\/symmetric orgraphs in the flow~$\dot{\cP}=\Or(\boldsymbol{\gamma}_3)(\cP)$.
\end{example}

\begin{example}\label{ExPenta}
Consider two $\Lambda$-\/shaped terms and a $\Pi$-\/shaped term --\,in Fig.~\ref{FigPenta}\,--
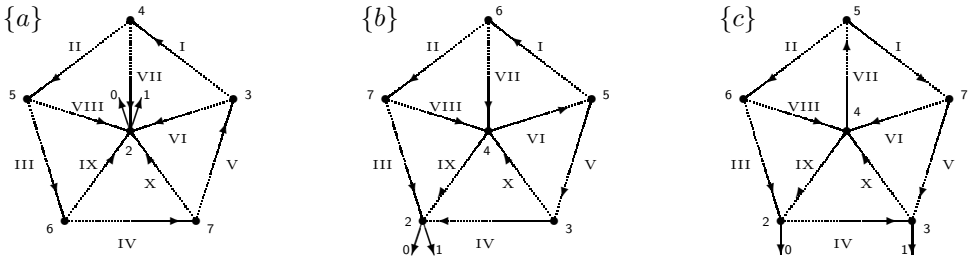
\begin{figure}[htb]
\centerline{
   %$\boldsymbol{\gamma}_5={}$
\raisebox{0pt}[72pt][56pt]{
%%% The 5-wheel with labelling of vertices and edges [JNMP2017].
{\unitlength=0.5mm
\begin{picture}(58,53)(-33.5,-5)%(27.5,23)
\put(-34,28){$\{a\}$}
%%%
\put(27.5,8.5){\circle*{2}}
\put(0,29.5){\circle*{2}}
\put(-27.5,8.5){\circle*{2}}
\put(-17.5,-23.75){\circle*{2}}
\put(17.5,-23.75){\circle*{2}}
\qbezier[70](27.5,8.5)(0,29.5)(0,29.5)
\qbezier[70](0,29.5)(-27.5,8.5)(-27.5,8.5)
\qbezier[70](-27.5,8.5)(-17.5,-23.75)(-17.5,-23.75)
\qbezier[70](-17.5,-23.75)(17.5,-23.75)(17.5,-23.75)
\qbezier[70](17.5,-23.75)(27.5,8.5)(27.5,8.5)
%%% Arroheads(a):
\put(0,5){\vector(0,-1){0}}
\put(-6,2){\vector(3,-1){0}}
\put(6,2){\vector(-3,-1){0}}
\put(-4,-6){\vector(2,3){0}}
\put(4,-6){\vector(-2,3){0}}
\put(6,25){\vector(-4,3){0}}
\put(-21.5,13){\vector(-4,-3){0}}
\put(-19.5,-16.75){\vector(1,-3){0}}
\put(13.5,-23.75){\vector(1,0){0}}
\put(25.5,2.5){\vector(1,3){0}}
%%%
\put(0,0){\circle*{2}}
\qbezier[60](27.5,8.5)(0,0)(0,0)
\qbezier[60](0,29.5)(0,0)(0,0)
\qbezier[60](-27.5,8.5)(0,0)(0,0)
\qbezier[60](-17.5,-23.75)(0,0)(0,0)
\qbezier[60](17.5,-23.75)(0,0)(0,0)
%%% Vertices:
\put(30.5,8.7){{\tiny\textsf{3}}}
\put(2,31){{\tiny\textsf{4}}}
\put(-30.5,8.7){\llap{{\tiny\textsf{5}}}}
\put(-20.5,-27){\llap{{\tiny\textsf{6}}}}
\put(20.5,-27){{\tiny\textsf{7}}}
%\put(2,4){{\tiny\textsf{2}}}
\put(-1.35,-6.25){{\tiny\textsf{2}}}
%%% Edges:
\put(13,21.5){{\tiny I}}
\put(-13,21.5){\llap{{\tiny II}}}
\put(-25.5,-10){\llap{{\tiny III}}}
\put(-3.5,-31){{\tiny IV}}
\put(25.5,-10){{\tiny V}}
\put(10,-3){{\tiny VI}}
\put(1.5,13.5){{\tiny VII}}
\put(-7,5.5){\llap{{\tiny VIII}}}
\put(-8.5,-10){\llap{{\tiny IX}}}
\put(7,-15){\llap{{\tiny X}}}
%%% Fork:
\put(0,0){\vector(-1,3){3}}
\put(0,0){\vector(1,3){3}}
\put(-3.5,8.7){\llap{\tiny\textsf{0}}}
\put(3.5,8.7){{\tiny\textsf{1}}}
\end{picture}
}
}
\qquad\qquad
\raisebox{0pt}[72pt][56pt]{
{\unitlength=0.5mm
\begin{picture}(58,53)(-33.5,-5)%(27.5,23)
\put(-34,28){$\{b\}$}
%%%
\put(27.5,8.5){\circle*{2}}
\put(0,29.5){\circle*{2}}
\put(-27.5,8.5){\circle*{2}}
\put(-17.5,-23.75){\circle*{2}}
\put(17.5,-23.75){\circle*{2}}
\qbezier[70](27.5,8.5)(0,29.5)(0,29.5)
\qbezier[70](0,29.5)(-27.5,8.5)(-27.5,8.5)
\qbezier[70](-27.5,8.5)(-17.5,-23.75)(-17.5,-23.75)
\qbezier[70](-17.5,-23.75)(17.5,-23.75)(17.5,-23.75)
\qbezier[70](17.5,-23.75)(27.5,8.5)(27.5,8.5)
%%% Arroheads(b):
\put(0,5){\vector(0,-1){0}}
\put(-6,2){\vector(3,-1){0}}
%\put(6,2){\vector(-3,-1){0}}
\put(21.5,6.5){\vector(3,1){0}}
%\put(-4,-6){\vector(2,3){0}}
\put(-13.5,-17.75){\vector(-2,-3){0}}
\put(4,-6){\vector(-2,3){0}}
\put(6,25){\vector(-4,3){0}}
\put(-21.5,13){\vector(-4,-3){0}}
\put(-19.5,-16.75){\vector(1,-3){0}}
\put(-13.5,-23.75){\vector(-1,0){0}}
%\put(25.5,2.5){\vector(1,3){0}}
\put(19.5,-17.75){\vector(-1,-3){0}}
%%%
\put(0,0){\circle*{2}}
\qbezier[60](27.5,8.5)(0,0)(0,0)
\qbezier[60](0,29.5)(0,0)(0,0)
\qbezier[60](-27.5,8.5)(0,0)(0,0)
\qbezier[60](-17.5,-23.75)(0,0)(0,0)
\qbezier[60](17.5,-23.75)(0,0)(0,0)
%%% Vertices:
\put(30.5,8.7){{\tiny\textsf{5}}}
\put(2,31){{\tiny\textsf{6}}}
\put(-30.5,8.7){\llap{{\tiny\textsf{7}}}}
\put(-20.5,-25){\llap{{\tiny\textsf{2}}}}
\put(20.5,-27){{\tiny\textsf{3}}}
%\put(2,4){{\tiny\textsf{2}}}
\put(-1.35,-6.25){{\tiny\textsf{4}}}
%%% Edges:
\put(13,21.5){{\tiny I}}
\put(-13,21.5){\llap{{\tiny II}}}
\put(-25.5,-10){\llap{{\tiny III}}}
\put(-3.5,-31){{\tiny IV}}
\put(25.5,-10){{\tiny V}}
\put(10,-3){{\tiny VI}}
\put(1.5,13.5){{\tiny VII}}
\put(-7,5.5){\llap{{\tiny VIII}}}
\put(-8.5,-10){\llap{{\tiny IX}}}
\put(7,-15){\llap{{\tiny X}}}
%%% Fork:
\put(-17.5,-23.75){\vector(-1,-3){3}}
\put(-17.5,-23.75){\vector(1,-3){3}}
\put(-21,-32.45){\llap{\tiny\textsf{0}}}
\put(-14,-32.45){{\tiny\textsf{1}}}
\end{picture}
}
}
\qquad\qquad
\raisebox{0pt}[72pt][56pt]{
{\unitlength=0.5mm
\begin{picture}(58,53)(-33.5,-5)%(27.5,23)
\put(-34,28){$\{c\}$}
%%%
\put(27.5,8.5){\circle*{2}}
\put(0,29.5){\circle*{2}}
\put(-27.5,8.5){\circle*{2}}
\put(-17.5,-23.75){\circle*{2}}
\put(17.5,-23.75){\circle*{2}}
\qbezier[70](27.5,8.5)(0,29.5)(0,29.5)
\qbezier[70](0,29.5)(-27.5,8.5)(-27.5,8.5)
\qbezier[70](-27.5,8.5)(-17.5,-23.75)(-17.5,-23.75)
\qbezier[70](-17.5,-23.75)(17.5,-23.75)(17.5,-23.75)
\qbezier[70](17.5,-23.75)(27.5,8.5)(27.5,8.5)
%%% Arroheads(c):
\put(0,24.5){\vector(0,1){0}}
\put(-6,2){\vector(3,-1){0}}
\put(6,2){\vector(-3,-1){0}}
%\put(-4,-6){\vector(2,3){0}}
\put(-13.5,-17.75){\vector(-2,-3){0}}
\put(4,-6){\vector(-2,3){0}}
%\put(6,25){\vector(-4,3){0}}
\put(21.5,13){\vector(4,-3){0}}
\put(-21.5,13){\vector(-4,-3){0}}
\put(-19.5,-16.75){\vector(1,-3){0}}
\put(13.5,-23.75){\vector(1,0){0}}
%\put(25.5,2.5){\vector(1,3){0}}
\put(19.5,-17.75){\vector(-1,-3){0}}
%%%
\put(0,0){\circle*{2}}
\qbezier[60](27.5,8.5)(0,0)(0,0)
\qbezier[60](0,29.5)(0,0)(0,0)
\qbezier[60](-27.5,8.5)(0,0)(0,0)
\qbezier[60](-17.5,-23.75)(0,0)(0,0)
\qbezier[60](17.5,-23.75)(0,0)(0,0)
%%% Vertices:
\put(30.5,8.7){{\tiny\textsf{7}}}
\put(2,31){{\tiny\textsf{5}}}
\put(-30.5,8.7){\llap{{\tiny\textsf{6}}}}
\put(-20.5,-25){\llap{{\tiny\textsf{2}}}}
\put(20.5,-27){{\tiny\textsf{3}}}
\put(2,4){{\tiny\textsf{4}}}
%\put(-1.35,-6.25){{\tiny\textsf{4}}}
%%% Edges:
\put(13,21.5){{\tiny I}}
\put(-13,21.5){\llap{{\tiny II}}}
\put(-25.5,-10){\llap{{\tiny III}}}
\put(-3.5,-31){{\tiny IV}}
\put(25.5,-10){{\tiny V}}
\put(10,-3){{\tiny VI}}
\put(1.5,13.5){{\tiny VII}}
\put(-7,5.5){\llap{{\tiny VIII}}}
\put(-8.5,-10){\llap{{\tiny IX}}}
\put(7,-15){\llap{{\tiny X}}}
%%% Fork:
\put(-17.5,-23.75){\vector(0,-1){9}}
\put(17.5,-23.75){\vector(0,-1){9}}
%\put(-18.5,-32.45){\llap{\tiny\textsf{0}}}
\put(-16.5,-32.45){{\tiny\textsf{0}}}
\put(16.5,-32.45){\llap{\tiny\textsf{1}}}
\end{picture}
}
}
}
\caption{Several $\Lambda$-\/shaped and $\Pi$-\/shaped terms from the result $\Or(\boldsymbol{\gamma}_5)$ of orienting to Kontsevich orgraphs the pentagon %\/-\/ 
wheel graph in the %wheel 
cocycle~$\boldsymbol{\gamma}_5$.}\label{FigPenta}
\end{figure}
from the right\/-\/hand side $\cQ_5=\Or(\boldsymbol{\gamma}_5)(\cP)$ of the flow determined by the Kontsevich\/--\/Willwacher pentagon\/-\/wheel cocycle %graph~
$\boldsymbol{\gamma}_5\in\ker\Id$ (see~\cite{OrMorphism,sqs17} and~\cite{JNMP2017}):
\begin{multline*}
\cQ_5=(+2)\cdot\left(
\begin{smallmatrix}
 \mathsf{0} & \mathsf{1} \\
 S_0 & S_1 
\end{smallmatrix}\ %
\begin{smallmatrix}
 \mathsf{2} & \mathsf{4} \\
 VI & I 
\end{smallmatrix}\ %
\begin{smallmatrix}
 \mathsf{2} & \mathsf{5} \\
 VII & II 
\end{smallmatrix}\ %
\begin{smallmatrix}
 \mathsf{2} & \mathsf{6} \\
 VIII & III 
\end{smallmatrix}\ %
\begin{smallmatrix}
 \mathsf{2} & \mathsf{7} \\
 IX & IV 
\end{smallmatrix}\ %
\begin{smallmatrix}
 \mathsf{2} & \mathsf{3} \\
 X & V 
\end{smallmatrix}
\right)
+{}\\
%%%
{}+ 10\cdot\left(
\begin{smallmatrix}
 \mathsf{0} & \mathsf{1} \\
 S_0 & S_1  
\end{smallmatrix}\ %
\begin{smallmatrix}
 \mathsf{2} & \mathsf{4} \\
 IV & X 
\end{smallmatrix}\ %
\begin{smallmatrix}
 \mathsf{2} & \mathsf{5} \\
 IX & VI 
\end{smallmatrix}\ %
\begin{smallmatrix}
 \mathsf{3} & \mathsf{6} \\
 V & I 
\end{smallmatrix}\ %
\begin{smallmatrix}
 \mathsf{4} & \mathsf{7} \\
 VII & II 
\end{smallmatrix}\ %
\begin{smallmatrix}
 \mathsf{2} & \mathsf{4} \\
 III & VIII 
\end{smallmatrix}
\right)
+{}\\
{}+
10\cdot\left(
\begin{smallmatrix}
 \mathsf{0} & \mathsf{3} \\
 S_0 & IV 
\end{smallmatrix}\ %
\begin{smallmatrix}
 \mathsf{1} & \mathsf{4} \\
 S_1 & X 
\end{smallmatrix}\ %
\begin{smallmatrix}
 \mathsf{2} & \mathsf{5} \\
 IX & VII 
\end{smallmatrix}\ %
\begin{smallmatrix}
 \mathsf{6} & \mathsf{7} \\
 II & I 
\end{smallmatrix}\ %
\begin{smallmatrix}
 \mathsf{2} & \mathsf{4} \\
 III & VIII 
\end{smallmatrix}\ %
\begin{smallmatrix}
 \mathsf{3} & \mathsf{4} \\
 V & VI 
\end{smallmatrix}
\right)
+\cdots.
\end{multline*}
The first and second graphs, which we denote by~$\{a\}$ and~$\{b\}$, are $\Lambda$-\/shaped whereas the third graph~$\{c\}$ is $\Pi$-\/shaped; there are 167~terms in~$\cQ_5$, of which some are grouped in pairs so that there are 91 bi\/-\/vector terms in total: of them, 15 orgraphs are $\Lambda$-\/shaped and the rest, $\Pi$-\/shaped, undergo the skew\/-\/symmetrisation.

The transition~$\{a\}\longmapsto\{c\}$ employs the following sign matching factors:\footnote{This example of transition between orgraphs, $\{a\}\rightleftarrows\{c\}$ as well as $\{b\}\rightleftarrows\{c\}$,
is instructive also in that the number of inversions, \textsl{i.e.}\ outgoing edge pairs Left~$<$ Right such that Left~$\succ$ Right, does change parity in the course of $\{a\}$, $\{b\}\rightleftarrows\{c\}$ (specifically, from $5$ and~$3$ to~$2$) but does \emph{not} contribute to the signs in front of the orgraph multiplicities.}
%%%
\begin{itemize}
\item Rule~\ref{RuleReverseSinksOrder} for~$\{c\}$ having $(S_0\:IV)\:(S_1\:X)\cdots$ contributes with~$(-)$.
\item The number of arrow reversals in the body of orgraph in the course of transition $\{a\}\longmapsto\{c\}$ equals~$4$ (specifically, these are edges $I$, $V$, $VII$, and~$IX$), whence~$(-)^4=(+)$ by Rule~\ref{RuleRevers}.
\item The transition $\Lambda\rightleftarrows\Pi$ between different shapes yields the universal sign factor~$(-)$.
\end{itemize}
In total, we have that for~$\{a\}\longmapsto\{c\}$, the overall sign is~$(-)\cdot(+)\cdot(-)=(+)$.

Counting the parity of three permutations of the edges $S_0\prec S_1\prec I\prec\ldots\prec X$ in the graphs~$\{a\}$, $\{b\}$, $\{c\}$ is left as an exercise,\footnote{The respective numbers of elementary transpositions are~$10$, $24$, and~$26$.}
%%%
cf.~\eqref{EqEpsilon}. 
\end{example}

\begin{example}\label{ExPentaMustache2}
The same\/-\/shape $\Lambda\rightleftarrows\Lambda$-\/transition $\{a\}\rightleftarrows\{b\}$ in the pentagon\/-\/wheel flow~$\cQ_5=\Or(\boldsymbol{\gamma}_5)(\cP)$, see previous example, amounts to the reversal of four arrows in the body of orgraph (specifically, the edges $IV^{\{b\}}=\mathsf{2}$-$\mathsf{3}$, $V^{\{b\}}=\mathsf{3}$-$\mathsf{5}$,
$VI^{\{b\}}=\mathsf{4}$-$\mathsf{5}$, and $IX^{\{b\}}=\mathsf{2}$-$\mathsf{4}$).
Rule~\ref{RuleRevers} tells us at once that the orgraph multiplicities,
$2$ for~$\{a\}$ and $10$ for~$\{b\}$, are taken with equal signs (here, ${+2}:{+10}$).
\end{example}

Rules~\ref{RuleReverseSinksOrder} and~\ref{RuleRevers} completely determine the signs of all Kontsevich orgraphs (counted with their multiplicities) as long as they are obtained by orienting a given graph~$\gamma_a$ in a cocycle~$\gamma=\sum_a c_a\cdot\gamma_a%\in\ker\Id
$.

\smallskip
Finally, let $\gamma_a$ and~$\gamma_b$ be topologically nonisomorphic unoriented graphs in a cocycle~$\gamma\in\ker\Id$
   %$c_a\cdot\gamma_a+c_b\cdot\gamma_b+\ldots=\gamma\in\ker\Id$ 
such that the differentials~$\Id(\gamma_a)$ and~$\Id(\gamma_b)$ have a least one nonzero unoriented graph in common.
   %Denote by~$\widehat{\gamma_{ab}}$ any such common graph.

\begin{Rule}
The matching of signs for --\,clearly, topologically nonisomorphic\,-- Kontsevich orgraphs which appear under~\eqref{EqOr} in the course of orienting different terms, $\gamma_a$ and~$\gamma_b$, in a cocycle~$\gamma=\sum_s c_s\cdot\gamma_s\in\ker\Id$ is provided by the cocycle itself, that is, by the coefficients~$c_s$ and respective edge orderings~$\sE(\gamma_a)$ and~$\sE(\gamma_b)$.
   %The cocycle was the input datum in the graph orientation procedure.
\end{Rule}

The signs in front of encodings of all the Kontsevich orgraphs are thus determined for the linear combination $\Or(\gamma)(\cP)$. Now, in each orgraph (and -- independently from other orgraphs), one can swap, at a price of the minus sign factor, the Left and Right outgoing edges issued from any vertex. For example, this is done during the \emph{normalisation} of encodings, when an orgraph, given in terms of $n$~pairs of $n+2$ target vertices, is realised by using a \emph{minimal} base-\/$(n+2)$ positive number.\footnote{%
%\begin{rem}%We note %however %that 
The normalisation of orgraph encodings, which can be performed independently for different graphs, can actually make it harder to count the number of arrow reverses in the course of transitions which are controlled by Rule~\ref{RuleRevers}.}
%\end{rem}

\begin{define}
An \textsl{inversion} is a situation where, at a vertex of a Kontsevich orgraph, Left~$\succ$ Right in the overall edge ordering $S_0\prec S_1\prec I\prec II\prec\cdots$.
\end{define} 
   %but Left~$>$ Right at a vertex of the orgraph.

\begin{Rule}\label{RuleInvers}
For any Kontsevich orgraph~$\Gamma$ obtained from~$\Gamma_0$ by
relabelling vertices and possibly, for some of the internal vertices, swapping the consecutive order of two edges issued from any such vertex,
   %isomorphic to an orgraph, 
we have
\[
\sign(\Gamma)=\frac{ (-)^{\#\text{inversions}\,(\Gamma)} }{ (-)^{\#\text{inversions}\,(\Gamma_0)} }\cdot\sign(\Gamma_0).
\]
Indeed, permutations of vertices induce parity\/-\/even permutations in the ordered string of edges, whereas each elementary transposition --\,within a pair of edges referred to a specific vertex\,-- is parity\/-\/odd.
\end{Rule}

%\subsection*{Discussion}
\centerline{\rule{1in}{0.7pt}}

\begin{rem}
Apart from the $\dd_\cP$-\/nontrivial linear scaling $\dot{\cP}=\cP$,
the only $\dd_\cP$-\/(non)\/tri\-vi\-al, nonlinear and proper ($\not\equiv0$)
flows $\dot{\cP}=\cQ(\cP)$ on spaces of Poisson structures which are known so far 
(cf.~\cite{OpenPrb19})
are only those $\cQ=\Or(\gamma)$ which are obtained by orienting $\Id$-\/cocycles, that is, graphs $\gamma\in\ker\Id$ without multiple edges. In consequence, none of the known orgraphs~$\cQ(\cP)$ contains any two\/-\/cycles $\bullet\rightleftarrows\bullet$. All the more surprising %interesting 
it is that orgraphs which do contain such two\/-\/cycles are \textsl{dominant} at the order~$\hbar^4$ in the expansion of Kontsevich $\star$-\/product (presumably, so they are at higher orders of the parameter~$\hbar$), see~\cite{cpp} and~\cite{CattaneoFelder2000,KontsevichFormality}.
\end{rem}

It would also be interesting to apply the technique of infinitesimal deformations, $\dot{\cP}=\Or(\gamma)(\cP)$, of Poisson structures~$\cP$ by using graph complex cocycles~$\gamma$ and the orientation morphism~$\Or$, and the technique of formal deformations $\cP\longmapsto\cP[\hbar]$ of Poisson structures by using the noncommutative $\star$-\/product
(see~\cite{Ascona96,KontsevichFormality} and~\cite{CattaneoFelder2000}) to deformations and \emph{deformation} quantisation of minimal surfaces which are specified by the Schild action functional~\cite{MKMinSurf2019}.

\appendix
\section{The proof of Proposition~\protect\ref{LemmaCommutatorGraSym}}\label{AppLieAlgMor}
%\begin{proof}%[The proof of Proposition~\protect\ref{LemmaCommutatorGraSym}]
The Lie bracket of unoriented graphs $\bigl(\gamma_1$,\ $\sE(\gamma_1)\bigr)$ and $\bigl(\gamma_2$,\ $\sE(\gamma_2)\bigr)$ on $n_i$~vertices and $2n_i-2$ edges in each term is, effectively,
\[
\bigl(\gamma_1 \mathbin{\vec{\circ}} \gamma_2 - \gamma_1 \mathbin{\cev{\circ}} \gamma_2,
\quad \sE(\gamma_1)\wedge\sE(\gamma_2) \bigr).
\]
The commutator $\bigl[\vec{d}/d\veps_1, \vec{d}/d\veps_2\bigr](\cP)$ of the flows
$\tfrac{d}{d\veps_i}(\cP)=\cQ_i(\cP)=\Or(\gamma_i)\bigl(\cP^{\otimes^{n_i}}\bigr)$ 
amo\-unts to the consecutive insertions of the bi\/-\/vector $\cQ_i(\cP)$ instead of a copy of the bi\/-\/vector~$\cP$ in one vertex of the orgraph~$\cQ_{2-i}$. The claim is that
\begin{equation}\label{EqLieAlgMor}
\bigl[ \Or(\gamma_1)\bigl(\cP^{\otimes^{n_1}}\bigr), \Or(\gamma_2)\bigl(\cP^{\otimes^{n_2}}\bigr) \bigr]_{\text{flows}} = \Or\bigl([\gamma_1,\gamma_2]_{\text{graphs}}\bigr)\bigl(\cP^{\otimes^{n_1 + n_2 - 1}}\bigr).
\end{equation}
Let us show that the minuend in the left\/-\/hand side is equal to the minuend in the right\/-\/hand side, and the same for the subtrahends.

\textbf{The left\/-\/hand side.}\quad The two orgraphs $\gamma_i$ are oriented independently from each other. The resulting orgraphs are built of wedges (such that the body edges get oriented in all admissible ways). Every such Kontsevich orgraph either is automatically skew\/-\/symmetric w.r.t.\ the content $f$,\ $g$ of sinks (\textsl{i.e.} with respect to the ordered pair of arguments of this bi\/-\/vector) or it is skew\/-\/symmetrised by the mechanism which already worked in Corollary~\ref{CorSkewGuaranteed}. Namely, the \emph{difference} of orgraphs with the identical labelling of ordered edges, $S_0\prec S_1\prec\sE(\gamma)$ or $\sE(\gamma)\prec S_0\prec S_1$, 
\begin{figure}[htb]
{\unitlength=0.6mm
\centerline{%
\begin{picture}(40,30)
\put(20,24){\oval(40,12)}
\put(10,24){\circle*{2}}
\put(30,24){\circle*{2}}
\put(10,24){\vector(0,-1){20}}
\put(30,24){\vector(0,-1){20}}
\put(-1,25){\llap{$\gamma$}}
\put(9,7){\llap{$S_0$}}
\put(31,7){$S_1$}
\put(9,0){$f$}
\put(29,0){$g$}
\end{picture}
%%%
\raisebox{13.5mm}[0pt][0pt]{\quad$-$\quad}
%%%
\begin{picture}(40,30)
\put(20,24){\oval(40,12)}
\put(10,24){\circle*{2}}
\put(30,24){\circle*{2}}
\put(10,24){\vector(0,-1){20}}
\put(30,24){\vector(0,-1){20}}
\put(0,18){\llap{$\gamma$}}
\put(9,7){\llap{$S_0$}}
\put(31,7){$S_1$}
\put(9,0){$g$}
\put(29,0){$f$}
\end{picture}
%%%
\raisebox{13.5mm}[0pt][0pt]{\quad$=$\quad}
%%%
\begin{picture}(40,30)
\put(20,24){\oval(40,12)}
\put(10,24){\circle*{2}}
\put(30,24){\circle*{2}}
\put(10,24){\vector(0,-1){20}}
\put(30,24){\vector(0,-1){20}}
\put(0,18){\llap{$\gamma$}}
\put(9,7){\llap{$S_0$}}
\put(31,7){$S_1$}
\put(9,0){$f$}
\put(29,0){$g$}
\end{picture}
%%%
\raisebox{13.5mm}[0pt][0pt]{\quad$+$\quad}
%%%
\begin{picture}(40,30)
\put(20,24){\oval(40,12)}
\put(10,24){\circle*{2}}
\put(30,24){\circle*{2}}
\put(10,24){\vector(1,-1){20}}
\put(30,24){\vector(-1,-1){20}}
\put(0,18){\llap{$\gamma$}}
\put(9,7){\llap{$S_0$}}
\put(31,7){$S_1$}
\put(9,0){$f$}
\put(29,0){$g$}
\end{picture}
}%centerline
}%unitlength
\end{figure}
%%%
but with the content of the two sinks swapped
is equal to the \emph{sum} of orgraphs with the identical labelling of the body edges but with the tails of the arrows~$S_0$ (heading to~$f$) and~$S_1$ (heading to~$g$) swapped.

\textbf{The right\/-\/hand side: minuend.}\quad Summing over vertices and attachments, we replace a vertex~$v_0$ in the ``victim'' graph~$\gamma_2$ by the graph~$\gamma_1$ on $n_1$~vertices and $2n_1-2$ edges. In the graph~$\gamma_2$, the edges which were incident to the blown\/-\/up vertex~$v_0$ are now attached --\,in all possible ways\,-- to some vertices of the inserted graph~$\gamma_1$.
Note that if two such edges, $vu$ and $v' u$ now connect two distinct vertices, $v$ and~$v'$, of the victim graph~$\gamma_2$ with the same vertex~$u$ of the graph~$\gamma_1$, then one of the two edges precedes the other with respect to the old edge ordering in the victim graph.
Likewise, if two such edges, $vv_0$ and~$v'v_0$ (for which the ordering was defined), now connect by~$vu$ and~$v'u'$ two distinct vertices, $v$ and~$v'$, in the victim graph~$\gamma_2$ with two distinct vertices, $u$ and~$u'$ in the graph~$\gamma_1$, then the insertion $\gamma_1\mathbin{\vec{\circ}}\gamma_2$ contains another graph in which the only difference from the above is that the two edges~$vv_0$ and~$v'v_0$ become~$v u'$ and~$v' u$ (but all the other edges~$v_0w$ in the body of the victim graph~$\gamma_2$ are attached to vertices of the graph~$\gamma_1$ in the same way as they are in the former case).

In every term of the graph $\gamma_1\mathbin{\vec{\circ}}\gamma_2$, consider the subgraph~$\gamma_1$; it remains intact in the course of insertion~$\vec{\circ}$. When the big graph $\gamma_1\mathbin{\vec{\circ}}\gamma_2$ is oriented by $\Or(\cdot)(\cP)$, so is the subgraph~$\gamma_1$. There were $2n_1-2$ edges in the (body of the) graph~$\gamma_1$; none of these edges, still between two vertices of the (sub)\/graph~$\gamma_1$, can be oriented using any wedge issued from a vertex of the outer graph~$\gamma_2$. This implies that exactly $2n_1-2$ arrows belonging to the $n_1$~bi\/-\/vector wedges are spent on orienting the body of the subgraph~$\gamma_1$ in the big graph $\gamma_1\mathbin{\vec{\circ}}\gamma_2$. Only two arrows leave the subgraph~$\gamma_1$: they head either to one or two sinks of the orgraph $\Or(\gamma_1\mathbin{\vec{\circ}}\gamma_2)$ or to a vertex\footnote{%
It cannot be that two arrows, $\overrightarrow{uv}$ and $\overrightarrow{u'v}$, from the subgraph~$\gamma_1$ head towards the same vertex~$v$ in the victim graph~$\gamma_2$ because, with regards to the old topology of~$\gamma_2$ in which a vertex~$v_0$ will be replaced by the graph~$\gamma_1$, this would mean a double edge~$v_0v$, hence $\gamma_2$~was a zero graph.}
%%%
or two vertices in the rest of the victim graph~$\gamma_2$, \textsl{i.e.} excluding the blown\/-\/up vertex~$v_0$. All the other edges which were of the form~$v_0v$ in the graph~$\gamma_2$ now become arrows $\overrightarrow{vu}$ heading towards vertices~$u$ of the subgraph~$\gamma_1$ in the big graph $\gamma_1\mathbin{\vec{\circ}}\gamma_2$.

To establish the equality of the minuend in the left\/-\/hand side of~\eqref{EqLieAlgMor} to the minuend in the right\/-\/hand side of that formula, it remains to recall that by construction, all body edges of the graph~$\gamma_2$ antecede those of~$\gamma_1$ (and \textit{vice versa}: body edges of the graph~$\gamma_1$ precede those of the graph~$\gamma_2$), so that now, the ordering $S_0^{(1)}\prec S_1^{(1)}$ of the arrows which are issued to the arguments of the bi\/-\/vector $\cQ_1(\cP)=\Or(\gamma_1)(\cP)$ is always dictated by the ordering $\sE(\gamma_2)\wedge S_0^{(2)} \wedge S_1^{(2)}$ of two edges from the (or)\/graph~$\Or(\gamma_2)$.

The subtrahends in which the graph~$\gamma_2$ is inserted into some vertex %~$u_0$ 
of the graph~$\gamma_1$ are processed in an analogous way.
The proof is complete.
%\end{proof}

\section{The tetrahedron: its $\Pi$-shaped orientation skew\/-\/symmetrized}\label{AppRotate}
The edge orderings $\sE(c)=\bigl(S_0\wedge S_1\wedge I\wedge\ldots\wedge VI\bigr)^{(c)}$ and 
$\sE(b)=\bigl(S_0\wedge S_1\wedge I\wedge\ldots\wedge VI\bigr)^{(b)}$ are
%%%
\begin{figure}[htb]
\begin{center}
{\unitlength=1mm
   %\raisebox{24mm}{$-\mathbf{3}\cdot$}\qquad
\begin{picture}(30,45)(-15,-19)%(c)
\put(-16,18){$(c)$}
\qbezier(-8,22)(12,30)(6,22)
\put(6,22){\vector(-2,-3){0}}
\put(7,20){{\small $120^\circ$}}
%%%
\put(-15,0){\vector(1,0){30}}
\put(-15,0){\vector(3,4){15}}
\put(0,20){\vector(0,-1){29}}
\put(0,20){\vector(3,-4){15}}
\put(15,0){\vector(-3,-2){15}}
\put(0,-10){\vector(-3,2){14}}
\put(0,-10){\vector(0,-1){10}}
\put(15,0){\vector(0,-1){10}}
%%%
\put(-16,-1){\llap{{\footnotesize 3}}}
\put(16,-1){{{\footnotesize 4}}}
\put(-0.75,-13){\llap{{\footnotesize 2}}}
\put(-0.75,-21){\llap{{\footnotesize 0}}}
\put(14.25,-11){\llap{{\footnotesize 1}}}
\put(-0.5,20.5){{{\footnotesize 5}}}
%%%
\put(-9,-7.5){\llap{\small I}}
\put(9,-7.5){\small II}
\put(0.75,6){\small III}
\put(-8,0.75){\small IV}
\put(-10,7){\llap{\small V}}
\put(10,7){\small VI}
\put(0.5,-17){{\small $S_0$}}
\put(16,-7.5){\small $S_1$}
%%%
\end{picture}%(c)
\qquad\raisebox{18mm}{$\longmapsto$}\qquad
\begin{picture}(30,45)(-15,-19)%(c2)
\put(-16,18){$(c)$}
%%%
\put(-15,0){\vector(1,0){30}}
\put(0,20){\vector(-3,-4){15}}
\put(0,20){\vector(0,-1){29}}
\put(15,0){\vector(-3,4){15}}
\put(15,0){\vector(-3,-2){15}}
\put(0,-10){\vector(-3,2){14}}
\put(0,-10){\vector(0,-1){10}}
\put(-15,0){\vector(0,-1){10}}
%%%
\put(-16,-1){\llap{{\footnotesize 2}}}
\put(16,-1){{{\footnotesize 3}}}
\put(-0.75,-13){\llap{{\footnotesize 4}}}
\put(-0.75,-21){\llap{{\footnotesize 1}}}
\put(-14.25,-11){{{\footnotesize 0}}}
\put(-0.5,20.5){{{\footnotesize 5}}}
%%%
\put(-9,-7.5){\llap{\small II}}
\put(9,-7.5){\small IV}
\put(0.75,6){\small VI}
\put(-8,0.75){\small I}
\put(-10,7){\llap{\small III}}
\put(10,7){\small V}
\put(0.5,-17){{\small $S_1$}}
\put(-16,-7.5){\llap{\small $S_0$}}
%%%
\end{picture}%(c2)
\qquad\raisebox{18mm}{$\simeq$}\qquad
\begin{picture}(30,45)(-15,-19)%(b2)
\put(-16,18){$(b)$}
%%%
\put(-15,0){\vector(1,0){30}}
\put(0,20){\vector(-3,-4){15}}
\put(0,20){\vector(0,-1){29}}
\put(15,0){\vector(-3,4){15}}
\put(15,0){\vector(-3,-2){15}}
\put(0,-10){\vector(-3,2){14}}
\put(0,-10){\vector(0,-1){10}}
\put(-15,0){\vector(0,-1){10}}
%%%
%\put(-16,-1){\llap{{\footnotesize 2}}}
%\put(16,-1){{{\footnotesize 3}}}
%\put(-0.75,-13){\llap{{\footnotesize 4}}}
\put(-0.75,-21){\llap{{\footnotesize 0}}}
\put(-14.25,-11){{{\footnotesize 1}}}
%\put(-0.5,20.5){{{\footnotesize 5}}}
%%%
\put(-9,-7.5){\llap{\small I}}
\put(9,-7.5){\small II}
\put(0.75,6){\small III}
\put(-8,0.75){\small IV}
\put(-10,7){\llap{\small VI}}
\put(10,7){\small V}
\put(0.5,-17){{\small $S_0$}}
\put(-16,-7.5){\llap{\small $S_1$}}
%%%
\end{picture}%(b2)
}
\end{center}
\end{figure}
%%% \noindent%
related by the equalities $S_0^{(c)}=S_1^{(b)}$, $S_1^{(c)}=S_0^{(b)}$,
$I^{(c)}=IV^{(b)}$, $II^{(c)}=I^{(b)}$, $III^{(c)}=V^{(b)}$, $IV^{(c)}=II^{(b)}$, 
$V^{(c)}=VI^{(b)}$, and $VI^{(c)}=III^{(b)}$, whence one easily verifies that
\[
\sE(c) = - (-)^6\, \sE(b),
\]
the leading minus coming from the relabelling $S_0\rightleftarrows S_1$ and the rest from the permutation of body edges.
We conclude that the arithmetic sum of two Kontsevich orgraphs $(b$--$c)$ in Fig.~\ref{FigTetra} on p.~\pageref{FigTetra} is the skew\/-\/symmetrisation of the $\Pi$-\/shaped orientation of the tetrahedron~$\boldsymbol{\gamma}_3$ by using arrow wedges.

\section*{Acknowledgements}
The authors thank the Organisers of international workshop `Homotopy algebras, deformation theory and quantization' (16--22~September 2018 in Bed\-le\-wo, Poland) for helpful discussions and warm atmosphere during the meeting.
The authors are grateful to the anonymous referee for remarks and suggestions, and to
G.~Felder, S.~Gutt, and M.~Kontsevich for helpful discussion. %and advice.
   %, and T.~Willwacher 
A part of this research was done while RB was visiting at~RUG and AVK was visiting at
the $\smash{\text{IH\'ES}}$ in Bures\/-\/sur\/-\/Yvette, France and at the
JGU~Mainz (supported by~IM JGU via project 5020 and JBI~RUG project~106552).
The research of AVK~was supported by the $\smash{\text{IH\'ES}}$ (partially, by the Nokia Fund). %50% = 1 month.

\end{document}